\ifpdf \usepackage[colorlinks=true, citecolor=blue, linkcolor=blue, urlcolor=blue]{hyperref} \fi
\newcommand{\cal}{\mathcal}
\newtheorem{formula}{}[section]
\newtheorem{definition}[formula]{Definition}
\newtheorem{corollary}[formula]{Corollary}
\newtheorem{remark}[formula]{Remark}
\newtheorem{lemma}[formula]{Lemma}
\newtheorem{theorem}[formula]{Theorem}
\newtheorem{prop}[formula]{Proposition}
\def\thrm{\begin{theorem}}
\def\thrml#1{\begin{theorem}\label{#1}}
\def\ethrm{\end{theorem}}
\def\rmrk{\begin{remark}}
\def\rmrkl#1{\begin{remark}\label{#1}}
\def\ermrk{\end{remark}}
\def\dfntn{\begin{definition}}
\def\dfntnl#1{\begin{definition}\label{#1}}
\def\edfntn{\end{definition}}
\def\nmrt{\begin{enumerate}}
\def\enmrt{\end{enumerate}}
\def\tm#1{\item[{\rm (#1)}]}
\def\qtnl#1{\begin{equation}\label{#1}}
\def\eqtn{\end{equation}}
\def\lmm{\begin{lemma}}
\def\lmml#1{\begin{lemma}\label{#1}}
\def\elmm{\end{lemma}}
\def\crllr{\begin{corollary}}
\def\crllrl#1{\begin{corollary}\label{#1}}
\def\ecrllr{\end{corollary}}
\def\css{\begin{cases}}
\def\ecss{\end{cases}}
\def\prf{\begin{proof}}
\def\eprf{\end{proof}}
\def\cC{{\cal C}}
\def\cX{{\cal X}}
\def\cY{{\cal Y}}
\DeclareMathOperator{\cyl}{cyl}
\DeclareMathOperator{\diag}{Diag}
\DeclareMathOperator{\sym}{Sym}
\DeclareMathOperator{\wl}{\mathsf{WL}}
\DeclareMathOperator{\WL}{\mathsf{WL}} 
\def\qaq{\quad\text{and}\quad}
\def\ov{\overline}
\def\sbs#1#2{_{\scriptscriptstyle{#1,#2}}}
\def\sbo#1{_{\scriptscriptstyle{#1}}}
\def\wh{\widehat}
\begin{document}

\title{On the Weisfeiler-Leman dimension of permutation graphs}
\author{}
\address{}
\author{Jin Guo}
\address{Hainan University, Haikou, China}
\email{guojinecho@163.com}
\author{Alexander L. Gavrilyuk}
\address{Shimane University, Matsue, Japan}
\email{gavrilyuk@riko.shimane-u.ac.jp}
\author{Ilia Ponomarenko}
\address{Hainan University, Haikou, China}
\address{Steklov Institute of Mathematics at St. Petersburg, Russia}
\email{inp@pdmi.ras.ru}
\thanks{The research of Alexander Gavrilyuk is supported by JSPS KAKENHI Grant Number 22K03403. 
The research of Jin Guo is supported by the National Natural Science Foundation of China 
(Grant No. 11961017).}
\date{}

\begin{abstract}
It is proved that the Weisfeiler-Leman dimension of the class of permutation graphs is at most 18. Previously it was only known that this dimension is finite (Gru{\ss}ien, 2017).
\end{abstract}

\maketitle

\section{Introduction}
The Weisfeiler-Leman (WL) algorithm, originally introduced 
in \cite{WL1968}, is a graph isomorphism test. 
It first colors the ordered pairs of vertices of 
the input graphs and then iteratively refines the coloring. 
The procedure is isomorphism-invariant, i.e., 
isomorphic graphs eventually receive the same canonical coloring. 
The converse is not true in general. 
Stronger isomorphism-invariant colorings can be obtained if 
one considers coloring the $d$-tuples of vertices with $d>2$; 
the corresponding generalization, called the $d${\bf -dimensional} 
Weisfeiler-Lehman algorithm (or the $d$-{\bf dim WL} for short), 
was introduced in the late 1970s (see \cite{CFI1992,B2016}) 
and played a crucial role 
in Babai’s breakthrough quasipolynomial-time 
isomorphism test \cite{B2015}. For $d = 1$ and $d = 2$, 
the $d$-dim WL is in fact the naive refinement and ordinary WL, respectively. 
The family of $d$-dim WL algorithms has striking 
connections with other areas in theoretical computer science 
and algebraic combinatorics (see \cite{EP2000,AM2013,GO2012, Conf2018}). 

Given a graph $X$, there exists the smallest integer $d$ 
such that the $d$-dim WL correctly identifies $X$. 
Following Grohe \cite{Grohe2017}, this $d$ is called 
the {\bf WL dimension} $\dim_{\mathsf{WL}}(X)$ of the graph $X$. 
The seminal paper of Cai, F\"urer and Immerman \cite{CFI1992} 
shows that the WL dimension in the class of all graphs cannot 
be bounded from above by a constant. However, graphs belonging 
to various interesting classes often have a bounded dimension. 

More precisely,
the WL dimension 
of a class $\mathcal{K}$ of graphs
is defined to be the maximum of $\dim_{\mathsf{WL}}(X)$ over all $X\in \mathcal{K}$ 
if it exists, or $\infty$ otherwise.
If $\dim_{\mathsf{WL}}(X)\leq m$, then the graph isomorphism problem 
restricted to $\mathcal{K}$ is solved by the $m$-dim WL, 
which, for fixed $m$, runs in polynomial time \cite{IL1990}. 
This yields a uniform purely combinatorial isomorphism test 
for all graph classes of bounded WL dimension.

By a celebrated result of Grohe \cite{Gr2010}, all graph classes with an excluded minor 
have a bounded WL dimension. In many recent studies, the explicit upper bounds on the WL 
dimension have been obtained for various graph classes: the interval graphs \cite{EPT2000}, 
planar graphs \cite{KPS2017}, graphs of given Euler genus \cite{GK2019}, or rank width \cite{GN2019}, 
to name a few. 
The graphs of WL dimension 1 have completely been characterized in \cite{AKRV2017} 
and independently in \cite{FKV2019} 
(however, such a characterization for the graphs of higher WL dimension seems unachievable~\cite{FKV2019}).

In the present paper, we study the WL dimension of the class of permutation graphs. 
This subclass of the perfect graphs has a large number of applications \cite{G} and 
has been intensely studied since the late 1960s \cite{PLE1971}.  
The original definition is somewhat cumbersome: 
a permutation graph is a graph $X$ such that there exists a labeling $\alpha_1,\ldots,\alpha_n$ of its vertices and a permutation $\pi\in \sym(n)$ 
satisfying $(i - j)(\pi(i) - \pi(j))<0$ if and only if $\alpha_i$ is adjacent to $\alpha_j$. 
For our purpose, we will make use of the following equivalent characterization (see \cite{PLE1971}): 
a graph $X$ is a permutation graph if and only if both $X$ and its complement 
are transitively orientable (see Section \ref{ssect:orienations} for further details).
Based on this property, Coulborn \cite{C1981} described a first polynomial-time algorithm 
for testing isomorphism of permutation graphs. 
Some other characterizations of the permutation graphs are known, e.g., 
as the comparability graphs of partially ordered sets of order dimension at most two \cite{BFR1971}: 
this was used by Spinrad \cite{S1985} 
to construct efficient recognition and 
isomorphism algorithms. 

A characterization of permutation graphs in terms of excluded induced subgraphs was obtained by Gallai \cite{G1967}. In the spirit of the above mentioned result by Grohe \cite{Gr2010}, 
Gru{\ss}ien \cite{Gruss2017} considered the question of whether there is a logic 
that captures polynomial time on all classes of graphs with excluded induced subgraphs. 
The main result obtained there implies that 
the Weisfeiler-Leman dimension of any permutation graph is bounded from above by an absolute constant; however, its value has not been estimated 
(and it is unclear whether it can be obtained within the approach of \cite{Gruss2017}).
Our main result is as follows.

\begin{theorem}\label{thm:perm}
The Weisfeiler-Leman dimension of the class of permutation graphs is at most $18$.
\end{theorem}

It is shown in \cite{PR2021} that the Weisfeiler-Leman dimension of 
bipartite permutation graphs is $2$ or $3$; thus, we expect that the bound 
from Theorem~\ref{thm:perm} can be improved.

The proof of Theorem~\ref{thm:perm} is based on the theory of coherent configurations and is presented in Section~\ref{sect:main}. Let us discuss the scheme of the proof briefly. Let $X$ be a (permutation) graph with vertex set~$\Omega$. The coherent configuration $\cX$ of~$X$ 
is the output of the 2-dim WL algorithm applied to $X$ and can be thought 
of as an arc-colored directed complete graph on $\Omega$. 

We were not able to prove that a permutation graph can be identified by the properties of its coherent configuration. To resolve this problem, 
we invoke the theory of extensions of coherent configurations 
(see \cite{CP} and Section~\ref{011122a} in the present paper 
for further details). For each positive integer~$m$, 
the coherent configuration~$\cX$ can canonically be extended to 
a coherent configuration $\wh{\cX}^{(m)}$ on the Cartesian 
power~$\Omega^m$. On the other hand, the number $\dim_{\mathsf{WL}}(X)$ is known to be 
less than or equal to the tripled so-called separability number 
of $\cX$, which is estimated from above by means of ~$\wh\cX^{(m)}$. 
For our purpose, we take $m=2$ and then it suffices to verify 
that the separability number of~$\cX$ is at most~$6$.

The proof goes by induction the base of which is the case when the permutation graph~$X$ is uniquely orientable. In Section~\ref{sect:uniq}, this property is  used to prove that the individualization of two elements belonging to~$\Omega^2$ is sufficient for the Weisfeiler-Leman algorithm to split the coherent configuration $\wh{\cX}^{(2)}$ into the discrete one. 
This in turn implies that the separability number of~$\cX$ is at most~$6$.

Now, let $X$ be a permutation graph which is not uniquely orientable. Then $X$ admits a non-trivial modular decomposition, and, as was implicitly proved in~\cite{C1981}, this decomposition  can be chosen in a canonical way. In Section~\ref{011122c}, we use this fact to prove that this decomposition ``can be seen'' inside the coherent configuration~$\wh{\cX}^{(2)}$. This enables us to guarantee that the separability number of~$\cX$ is less than or equal to the maximum separability number of the coherent configurations of the smaller graphs associated with the modular decomposition of $X$. 

The paper is organized as follows.
In order to make the proof as self-contained as possible, we present in Section~\ref{011122w} 
the necessary notation and facts about permutation graphs and in Section~
\ref{sect:cc&extensions} those from the theory of coherent configurations. 
More details about these topics can be found in the monographs~\cite{G} and~\cite{CP}, respectively. 
In Section~\ref{011122c} we study an interplay between modular decompositions 
of graphs and their coherent configurations, as applied to algebraic isomorphisms.
Sections~\ref{sect:uniq} and~\ref{sect:main} are devoted to the proof of Theorem \ref{thm:perm} 
by induction as described above.

\subsection*{Notation.}
Throughout the paper, $\Omega$ stands for a finite set. For $\Delta\subseteq \Omega$, the diagonal of the Cartesian product $\Delta\times\Delta$ is denoted by~$1_\Delta$. 
For an equivalence relation $e$ on a subset of $\Omega$, we denote by $\Omega/e$ 
the set of all classes of~$e$.

For a binary relation $r\subseteq\Omega\times\Omega$, 
we set 
$r^*=\{(\beta,\alpha)\!:\ (\alpha,\beta)\in r\}$, and $r^f=\{(\alpha^f,\beta^f)\!:\ (\alpha,\beta)\in r\}$ for any bijection $f$ from $\Omega$ to another set. 
The {\bf left support} of $r$ is defined to be the set 
$\{\alpha\in\Omega\!:\ \alpha r\ne\varnothing\}$,
where $\alpha r=\{\beta\in\Omega\!:\ (\alpha,\beta)\in r\}$ is the {\bf neighborhood} 
of a point $\alpha\in\Omega$ in the relation $r$. The {\bf right support} of 
$r$ is the left support of $r^*$.
For relations $r,s\subseteq\Omega\times\Omega$, we put 
$$r\cdot s=\{(\alpha,\beta)\!:\ (\alpha,\gamma)\in r,\ (\gamma,\beta)\in s\text{ for some }\gamma\in\Omega\}.$$ 
For $\Delta,\Gamma\subseteq \Omega$, we set $r_{\Delta,\Gamma}=r\cap (\Delta\times \Gamma)$ 
and abbreviate $r_{\Delta}:=r_{\Delta,\Delta}$.

For a set $S$ of relations on $\Omega$, 
we denote by $S^\cup$ the set of all unions of the elements of $S$ and 
put $S^*=\{s^*\!:\ s\in S\}$ and $S^f=\{s^f\!:  s\in S\}$ for any bijection $f$ from $\Omega$ 
to another set.

\section{Permutation graphs and their modular decomposition}\label{011122w}

\subsection{Basic definitions.}
By a {\bf graph} we mean a finite simple undirected graph, i.e., a pair $X=(\Omega,E)$
of a finite set $\Omega$ of vertices and an irreflexive symmetric relation $E\subseteq \Omega\times \Omega$. 
The elements of $E=:E(X)$ are called {\bf edges}\footnote{Traditionally, an \emph{edge} means 
a subset of two adjacent vertices. In this paper, by an edge we mean an ordered pair 
of adjacent vertices, which is usually called an \emph{arc}.}, 
and $E$ is the {\bf edge set} of the graph $X$. 
Two vertices $\alpha$ and $\beta$ are {\bf adjacent} (in $X$)
whenever $(\alpha,\beta)\in E$ (equivalently, $(\beta,\alpha)\in E$). 
The subgraph of $X$ induced by a set $\Delta\subseteq \Omega$ 
is denoted by $X_{\Delta}=(\Delta,E_{\Delta})$.
A graph $X=(\Omega,E)$ is {\bf connected} if the transitive reflexive closure of $E$ equals $\Omega^2$. 
The {\bf complement} of $X$ is the graph $\overline{X}=(\Omega,\overline{E})$ where 
$\overline{E}=\Omega^2\setminus \left(E\cup 1_{\Omega}\right)$.
A graph is {\bf coconnected} if its complement is connected. 

\subsection{Orientations of graphs.}\label{ssect:orienations}
An {\bf orientation} of a graph $X=(\Omega,E)$ is a subset $A\subseteq E$ such that $A\cap A^*=\varnothing$ and $A\cup A^*=E$. 
A orientation $A$ is called {\bf transitive} if $A\cdot A\subseteq A$, 
which means that $(\alpha,\beta), (\beta,\gamma)\in A$ implies $(\alpha,\gamma)\in A$.
A graph is called {\bf transitively orientable} (or a {\bf comparability graph}) 
if it admits a transitive orientation.
A transitive orientation of a complete graph is called 
a {\bf transitive tournament}.

Following \cite[Chapter~5]{G}, we define the $\Gamma$-{\bf relation} 
of $X$ as a binary relation on $E$ consisting of all pairs 
$(\alpha,\beta)$ and $(\alpha',\beta')$ such that
\[
\quad\left(\alpha=\alpha'\mbox{ and } (\beta,\beta')\notin E\right)
\quad
\mbox{or}\quad \left(\beta=\beta'\mbox{ and } (\alpha,\alpha')\notin E\right).
\]
It is easily seen that this relation is symmetric and reflexive.
The transitive closure of the $\Gamma$-relation is an equivalence 
relation and hence partitions $E$ into equivalence classes 
called the {\bf implication classes} of $X$. 
The implication class of $X$ containing an edge $\mathbf{e}\in E$ 
is denoted by $I_X(\mathbf{e})$. In what follows, given a graph $X$, 
we abbreviate, $I(\mathbf{e})=I_X(\mathbf{e})$ if $\mathbf{e}\in E$ 
and $I(\mathbf{e})=I_{\overline{X}}(\mathbf{e})$ if $\mathbf{e}\in \overline{E}$. 

\begin{lemma}\label{lm:implicationclasses}{\rm(\hspace{1sp}\cite[Theorem~5.1, Theorem~5.4]{G})}

Let $X$ be a transitively orientable graph and $\mathbf{e}$ any of its edges. Then:
\begin{itemize}
    \item[(i)] $I_X(\mathbf{e})\cap I_X(\mathbf{e})^*=\varnothing$;
    \item[(ii)] if $A$ is a transitive orientation of $X$, then 
$I_X(\mathbf{e})\subseteq A$ or $I_X(\mathbf{e})^*\subseteq A$. 
\end{itemize}
\end{lemma}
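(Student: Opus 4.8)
The plan is to reduce both parts to a single \emph{forcing property} of the $\Gamma$-relation: in any transitive orientation $A$ of $X$, the relation $\Gamma$ can never connect an edge lying in $A$ to an edge lying in $A^*$. Concretely, I would first establish the claim that if $A$ is a transitive orientation and $\mathbf{e}\in A$ with $\mathbf{e}\,\Gamma\,\mathbf{e}'$, then $\mathbf{e}'\in A$. Writing $\mathbf{e}=(\alpha,\beta)$ and examining the two defining cases of the $\Gamma$-relation, say the case $\alpha=\alpha'$ with $(\beta,\beta')\notin E$, one argues by contradiction: if $\mathbf{e}'=(\alpha,\beta')\notin A$, then $(\beta',\alpha)\in A$ because $A\cup A^*=E$, and then transitivity $A\cdot A\subseteq A$ applied to $(\beta',\alpha)$ and $(\alpha,\beta)$ forces $(\beta',\beta)\in A\subseteq E$, contradicting $(\beta,\beta')\notin E$. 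The symmetric case $\beta=\beta'$ is handled identically. This local three-vertex argument is the crux of the whole lemma; everything else is bookkeeping.

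Granting the forcing property, part (ii) follows by iteration. Since $\Gamma$ is symmetric and the implication classes are by definition the classes of its transitive closure, any $\mathbf{f}\in I_X(\mathbf{e})$ is joined to $\mathbf{e}$ by a finite $\Gamma$-chain; applying the claim step by step along this chain shows that $\mathbf{e}\in A$ implies $\mathbf{f}\in A$, that is, $I_X(\mathbf{e})\subseteq A$. As $A\cup A^*=E$, the only remaining possibility is $\mathbf{e}^*\in A$, and here I would use that the $\Gamma$-relation is invariant under the $*$-operation, since its two defining conditions are unchanged when both pairs are reversed. Consequently $I_X(\mathbf{e})^*=I_X(\mathbf{e}^*)$, and the previous argument applied to $\mathbf{e}^*$ yields $I_X(\mathbf{e})^*\subseteq A$.

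For part (i) I would invoke the hypothesis that $X$ is transitively orientable to fix one transitive orientation $A$, and then argue by contradiction. If some $\mathbf{f}$ belonged to $I_X(\mathbf{e})\cap I_X(\mathbf{e})^*$, then both $\mathbf{f}\in I_X(\mathbf{e})$ and $\mathbf{f}^*\in I_X(\mathbf{e})$. By part (ii), either $I_X(\mathbf{e})\subseteq A$ or $I_X(\mathbf{e})^*\subseteq A$; in the first case $\mathbf{f},\mathbf{f}^*\in A$, and in the second case the same conclusion follows after applying the $*$-operation. Either way this contradicts $A\cap A^*=\varnothing$, so the intersection must be empty. Note that part (i) genuinely uses the \emph{existence} of a transitive orientation, whereas part (ii) is a purely conditional statement about an arbitrary given $A$.

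The only delicate point is the forcing claim, and within it the correct handling of the orientation axioms: one must use that being an edge guarantees membership in $A\cup A^*$, so that the contrapositive assumption $\mathbf{e}'\notin A$ legitimately yields $\mathbf{e}'^*\in A$, and that transitivity is precisely what turns two $A$-edges sharing the endpoint~$\alpha$ into a third $A$-edge whose existence violates the non-adjacency built into the definition of $\Gamma$. Once this is set up correctly, both statements are immediate, and no global structural information about the implication classes beyond their description as $\Gamma$-closures is required.
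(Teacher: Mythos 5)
Your proposal is correct: the forcing claim (a $\Gamma$-step can never lead from an edge of a transitive orientation $A$ to an edge of $A^*$, by the three-vertex transitivity/non-adjacency contradiction), its propagation along $\Gamma$-chains, the $*$-invariance of $\Gamma$ giving $I_X(\mathbf{e})^*=I_X(\mathbf{e}^*)$, and the deduction of (i) from (ii) by fixing one transitive orientation are all sound and complete. The paper gives no proof of its own---the lemma is imported from \cite[Theorems~5.1 and 5.4]{G}---and your argument is essentially the standard forcing proof found in that reference, so it matches the intended justification.
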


A graph is said to be {\bf uniquely partially orderable (UPO)} 
if it admits 
at most two transitive orientations. (Note that if a UPO graph contains 
at least one edge, then there are exactly two transitive orientations, 
one being the reversal of the other.) 
For example, a connected bipartite graph is always UPO, 
while a complete graph $K_n$ on $n$ vertices is UPO 
if and only if $n\leq 2$.

A graph $X$ is called a {\bf permutation graph} if and only if $X$ and $\overline{X}$ 
are transitively orientable. 
Clearly, the class of permutation graph is closed under taking 
complement and induced subgraphs.
A (permutation) graph $X$ is {\bf uniquely orientable} if both $X$ and $\overline{X}$ 
are UPO graphs. It follows from Lemma \ref{lm:implicationclasses} that a permutation graph 
$X=(\Omega,E)$ is uniquely orientable if and only if 
\begin{equation}\label{eq:uniquely}
I_X(\mathbf{e})\cup I_X(\mathbf{e})^*\cup I_{\overline{X}}(\mathbf{f})\cup I_{\overline{X}}(\mathbf{f})^*=\Omega^2\setminus 1_{\Omega}
\end{equation}
holds for every $\mathbf{e}\in E$ and every $\mathbf{f}\in \overline{E}$.


\begin{lemma}{\rm (cf. \cite[Chapter~5, Exercise~5]{G})}\label{lemma-tournament}
Let $A$ and $\overline{A}$ be transitive orientations of a (permutation) graph
and its complement, respectively. Then $A\cup \overline{A}$ is 
a transitive tournament.
\end{lemma}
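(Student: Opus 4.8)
The plan is to put $T=A\cup\overline{A}$ and to establish the two defining properties of a transitive tournament in turn: first that $T$ is an orientation of the complete graph on $\Omega$, and then that it is transitive.

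For the first property I would check $T\cup T^*=\Omega^2\setminus 1_{\Omega}$ and $T\cap T^*=\varnothing$. The union is immediate from $A\cup A^*=E$ and $\overline{A}\cup\overline{A}^*=\overline{E}$, since then $T\cup T^*=E\cup\overline{E}=\Omega^2\setminus 1_{\Omega}$. For disjointness I would expand $T\cap T^*=(A\cup\overline{A})\cap(A^*\cup\overline{A}^*)$ into four pieces: $A\cap A^*$ and $\overline{A}\cap\overline{A}^*$ are empty because $A$ and $\overline{A}$ are orientations, while $A\cap\overline{A}^*$ and $\overline{A}\cap A^*$ are empty because $A\subseteq E$, $\overline{A}\subseteq\overline{E}$, and $E\cap\overline{E}=\varnothing$. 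This part is pure bookkeeping.

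The substance is transitivity, i.e.\ $T\cdot T\subseteq T$. Since $T$ orients the complete graph, exactly one of $(\alpha,\gamma)$, $(\gamma,\alpha)$ lies in $T$ for distinct $\alpha,\gamma$; hence transitivity is equivalent to the absence of a directed triangle $\alpha\to\beta\to\gamma\to\alpha$ with all three arcs in $T$. I would first note that such a triangle would have three distinct vertices: irreflexivity of $T$ forces $\alpha\neq\beta$ and $\beta\neq\gamma$, and $T\cap T^*=\varnothing$ rules out $\alpha=\gamma$. Each of its arcs lies either in $A$ (so the underlying pair is an edge of $X$) or in $\overline{A}$ (so the underlying pair is an edge of $\overline{X}$).

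The key step is to derive a contradiction from any such triangle. If all three arcs lie in $A$, then two of them are consecutive, say $(\alpha,\beta),(\beta,\gamma)\in A$, and transitivity of $A$ gives $(\alpha,\gamma)\in A$, contradicting $(\gamma,\alpha)\in A$ together with $A\cap A^*=\varnothing$; the all-$\overline{A}$ case is symmetric. In the remaining, mixed case exactly two arcs share one of the two types and the third arc has the other type; since all three arcs of a triangle are pairwise adjacent, the two arcs of the same type are consecutive, say $(\mu,\nu),(\nu,\rho)\in A$. Then transitivity of $A$ yields $(\mu,\rho)\in A\subseteq E$, so $\{\mu,\rho\}$ is an edge of $X$; but the third arc of the triangle is $(\rho,\mu)\in\overline{A}\subseteq\overline{E}$, forcing $\{\mu,\rho\}$ to be an edge of $\overline{X}$, which contradicts $E\cap\overline{E}=\varnothing$. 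The subcase with two consecutive arcs in $\overline{A}$ is symmetric. I expect this mixed case to be the only genuinely delicate point; once it is settled, no directed triangle can exist, $T$ is transitive, and the lemma follows.
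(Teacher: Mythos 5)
Your proposal is correct and follows essentially the same route as the paper's proof: establish that $A\cup\overline{A}$ is an irreflexive, antisymmetric orientation of the complete graph, then rule out a directed $3$-cycle by observing that two of its arcs must lie consecutively in the same orientation ($A$ or $\overline{A}$), whose transitivity then contradicts the presence of the third arc. The paper compresses your case analysis into a single ``without loss of generality'' step, but the underlying argument is identical.
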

\begin{proof} 
Clearly, the relation $A\cup \overline{A}$ is anti-symmetric and irreflexive. 
To prove that it is transitive, it suffices to show that it does not contain 
a cycle of length $3$. Suppose on the contrary that 
$(\alpha,\beta),(\beta,\gamma),(\gamma,\alpha)\in A\cup \overline{A}$. 
Without loss of generality, we may assume $(\alpha,\beta),(\beta,\gamma)\in A$. 
Then $(\alpha,\gamma)\in A$, as $A$ is a transitive orientation of the graph, 
a contradiction.
\end{proof}

\subsection{Composition graph.}
Let $X_0$ be a graph with vertex set $\{\alpha_1,\alpha_2,\ldots,\alpha_n\}$ 
and let $\cC=\{X_1,\ldots,X_n\}$ be a collection of graphs whose vertex sets 
are pairwise disjoint. The {\bf composition graph} 
\[
X:=X_0[X_1,X_2,\ldots,X_n]=X_0[\cC]
\] 
is formed as follows: for all $1 \leq i,j \leq n$, replace the vertex $\alpha_i$ 
in $X_0$ with the graph $X_i$ and join each vertex of $X_i$ to each 
vertex of $X_j$ whenever $\alpha_i$ is adjacent to $\alpha_j$ in $X_0$. 
The vertex set $\Omega_i$ of the graph $X_i$, $i=1,\ldots,n$, 
is {\bf partitive}, i.e., every vertex of $X$, not belonging to $\Omega_i$, 
is adjacent to either all or none of the vertices of $\Omega_i$.  
The sets $\Omega_i$ are called the {\bf modules} of $X$, and 
the corresponding decomposition of the vertex set of $X$ is called {\bf modular}. 
The composition graph $X$ is {\bf non-trivial} if $n\geq 2$ and 
$|\Omega_i|\geq 2$ for at least one $i$.

\begin{lemma}\label{lm:eE}
Let $e$ be an equivalence relation on $\Omega$. 
Then a graph $(\Omega,E)$ is a composition graph 
whose modules are the equivalence classes of $e$ 
if and only if 
\begin{equation}\label{eq:eE}
e\cdot (E-e)=E-e=(E-e)\cdot e.
\end{equation}
\end{lemma}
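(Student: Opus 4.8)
The plan is to prove both implications by unpacking what the relational identity $e\cdot(E-e)=(E-e)=(E-e)\cdot e$ says pointwise, and matching it against the defining "partitive" property of modules. Throughout I write the equivalence classes of $e$ as $\Omega_1,\ldots,\Omega_n$; these are the candidate modules. The key observation is that $E-e$ consists exactly of the edges of $(\Omega,E)$ joining vertices in \emph{distinct} classes (edges inside a class lie in $e$, so they are removed), and composing with $e$ on the right replaces the endpoint $\beta$ of an edge $(\alpha,\beta)\in E-e$ by any class-mate of $\beta$.

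\textbf{The forward direction.} Suppose $(\Omega,E)$ is a composition graph whose modules are the classes of $e$. I would first note that the partitive property says precisely: if $\alpha\notin\Omega_i$ and $\alpha$ is adjacent to some vertex of $\Omega_i$, then $\alpha$ is adjacent to \emph{every} vertex of $\Omega_i$. I want to derive the two identities in \eqref{eq:eE}. For $(E-e)\cdot e=E-e$: take $(\alpha,\beta)\in E-e$, so $\alpha,\beta$ lie in different classes and are adjacent; for any $\gamma\in\beta e$ (same class as $\beta$), the partitive property applied to the module of $\beta$ and the external vertex $\alpha$ gives $(\alpha,\gamma)\in E$, and since $\gamma$ is in a different class from $\alpha$ we get $(\alpha,\gamma)\in E-e$; this proves $(E-e)\cdot e\subseteq E-e$, and the reverse inclusion is immediate since $e$ is reflexive. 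The identity $e\cdot(E-e)=E-e$ is symmetric (compose on the left, move the first endpoint within its class) and follows the same way using that $E$ is symmetric.

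\textbf{The converse.} Assume \eqref{eq:eE}. I must show each class $\Omega_i$ is partitive, i.e.\ that every external vertex is joined to all or none of $\Omega_i$. Suppose $\alpha\notin\Omega_i$ is adjacent to some $\beta\in\Omega_i$; then $(\alpha,\beta)\in E-e$ because $\alpha,\beta$ lie in different classes. For any other $\gamma\in\Omega_i$ we have $(\beta,\gamma)\in e$, so $(\alpha,\gamma)\in(E-e)\cdot e=E-e\subseteq E$, whence $\alpha$ is adjacent to $\gamma$. Thus $\alpha$ is adjacent to all of $\Omega_i$, giving the partitive property. Conversely, if $\alpha$ is non-adjacent to some $\beta\in\Omega_i$, the same argument applied to the complement (or a direct contrapositive) forces non-adjacency to all of $\Omega_i$; the irreflexivity and symmetry of $E$ handle the remaining bookkeeping. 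Since every class is partitive, $(\Omega,E)$ is the composition $X_0[X_1,\ldots,X_n]$ where $X_0$ is the quotient graph on the classes and each $X_i=X_{\Omega_i}$.

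The only genuinely delicate point I anticipate is the careful handling of the diagonal and of edges \emph{within} a class: one must check that $E-e$ really strips out all intra-class pairs (so that $E-e$ encodes exactly the inter-class adjacencies) and that reflexivity of $e$ yields the trivial inclusions $E-e\subseteq e\cdot(E-e)$ and $E-e\subseteq(E-e)\cdot e$ cleanly. I also need to make sure the two displayed identities together—not just one of them—are equivalent to the partitive condition, since the partitive property is genuinely two-sided (it constrains external vertices symmetrically with respect to both endpoints); invoking the symmetry $E=E^*$ is what lets me deduce the left identity from the right and keeps the argument from being one-sided. Everything else is a routine pointwise chase.
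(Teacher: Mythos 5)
Your proof is correct and is essentially the paper's argument: both rest on the observation that $E-e$ consists exactly of the inter-class edges, and that Eq.~\eqref{eq:eE} amounts to the partitive property of the classes (equivalently, that the bipartite graph between any two distinct classes of $e$ is empty or complete). The paper merely compresses your two-direction pointwise chase into a single chain of equivalences, so no substantive difference remains.
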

\begin{proof}
One has $(\alpha,\beta)\in E-e$ if and only if $(\alpha,\beta)\in E$ 
and $\alpha,\beta$ belong to different equivalence classes of $e$. 
Thus, Eq. \eqref{eq:eE} holds if and only if, 
for every two distinct $\Delta,\Gamma\in \Omega/e$, 
the graph $(\Delta\cup \Gamma, E_{\Delta,\Gamma})$ is empty or complete bipartite.
The latter condition holds if and only if $(\Omega,E)$ is a composition 
graph whose modules are the classes of $\Omega/e$.
\end{proof}

In what follows, a graph $X=(\Omega,E)$ satisfying Eq. \eqref{eq:eE} 
for some equivalence relation $e$ will be refereed to as 
a {\bf composition graph with respect to} $e$. 
Note that this is equivalent to saying that $X=X_0[\cC]$ 
where $\cC=\{X_{\Delta}\mid \Delta\in \Omega/e\}$ 
and $X_0$ is the {\bf quotient graph} of $X$ modulo $e$ 
(that is, the vertex set of $X_0$ is $\Omega/e$ 
and two distinct vertices are adjacent in $X_0$ 
whenever there is an edge between them).

Let $X=(\Omega,E)$ be a graph. 
Two vertices $\alpha,\beta$ are called $0$-{\bf twins} ($1$-{\bf twins}, respectively) 
if $\alpha,\beta$ are not adjacent (are adjacent, respectively) and, 
for any vertex $\gamma\in\Omega\setminus\{\alpha,\beta\}$,
the set $\gamma E$ contains either both $\alpha$ and $\beta$ or none of them.
A graph is {\bf irreducible} if it contains neither distinct $0$-twins nor $1$-twins, 
and {\bf reducible} otherwise.
The relation ``{\it to be $0$-twins in $X$}'' is an equivalence relation on $\Omega$, 
called the {\bf $0$-equivalence} of~$X$.
The relation ``{\it to coincide or to be $1$-twins in $X$}'' is also 
an equivalence relation on $\Omega$, called the {\bf $1$-equivalence} of~$X$.
The following statement is obvious.

\begin{lemma}\label{lm:reducible}
A reducible graph is a non-trivial composition graph whose modules are {either all classes of the $0$-equivalence or all classes of the $1$-equivalence}. 
\end{lemma}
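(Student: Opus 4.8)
The plan is to reduce everything to the composition criterion of Lemma~\ref{lm:eE}, applied to one of the two canonical equivalence relations attached to $X=(\Omega,E)$: the $0$-equivalence $e_0$ or the $1$-equivalence $e_1$. Since $X$ is reducible, its reducibility is witnessed either by a pair of distinct $0$-twins or by a pair of distinct $1$-twins; in the former case I would work with $e_0$, in the latter with $e_1$. The whole task then splits into two pieces: showing that the classes of the chosen equivalence are modules (so that Lemma~\ref{lm:eE} applies), and checking that the resulting composition is non-trivial.

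For the module property, fix a class $\Delta\in\Omega/e_0$ and an external vertex $\gamma\in\Omega\setminus\Delta$. Any two vertices $\alpha,\beta\in\Delta$ are $0$-twins, so by definition $\gamma E$ contains both or neither of them; ranging over all pairs of $\Delta$ shows that $\gamma$ is adjacent to all of $\Delta$ or to none of it. I would then propagate this to pairs of classes: given distinct $\Delta,\Gamma\in\Omega/e_0$, if some edge $(\delta_0,\gamma_0)\in E_{\Delta,\Gamma}$ exists, then $\gamma_0$ (external to $\Delta$) is adjacent to all of $\Delta$, and by the symmetric statement for $\Gamma$ every vertex of $\Gamma$ is adjacent to every vertex of $\Delta$, so $(\Delta\cup\Gamma,E_{\Delta,\Gamma})$ is complete bipartite; otherwise it is empty. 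This is precisely the condition appearing in the proof of Lemma~\ref{lm:eE}, whence $X$ is a composition graph with respect to $e_0$ with the classes of $e_0$ as modules. The identical argument, with ``adjacent'' replacing ``non-adjacent'' throughout, handles $e_1$.

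It remains to verify non-triviality. If distinct $0$-twins $\alpha,\beta$ exist, then the class of $e_0$ containing them has size at least $2$, giving a module $\Omega_i$ with $|\Omega_i|\geq 2$; and $\Omega/e_0$ has at least two classes, since a single class would force every pair of vertices to be non-adjacent, i.e.\ $X$ edgeless. Thus as soon as $X$ has an edge, the decomposition with respect to $e_0$ is non-trivial, and symmetrically distinct $1$-twins together with a non-edge yield a non-trivial decomposition with respect to $e_1$.

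I expect the only genuine obstacle to be the two degenerate graphs: the edgeless graph (for which $e_0$ collapses to a single class) and the complete graph (for which $e_1$ does), where the literal ``twin-class'' decomposition becomes trivial even though the graph is reducible. These are exactly the graphs that fail to be simultaneously connected and coconnected, and I would dispose of them by the separate, immediate observation that an edgeless graph on at least two vertices and a complete graph on at least three vertices each decompose non-trivially in the obvious parallel (respectively series) manner. Everything else in the argument is the routine translation between the twin definition and Eq.~\eqref{eq:eE}, which is why the statement is essentially immediate once Lemma~\ref{lm:eE} is in hand.
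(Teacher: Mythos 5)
Your core argument is correct, and it is exactly the argument the paper has in mind: the paper offers no proof at all (the lemma is declared ``obvious''), and the intended reasoning is precisely your reduction to Lemma~\ref{lm:eE} --- the twin condition makes every external vertex adjacent to all or none of a twin class, so the bipartite graph between any two classes is empty or complete, and non-triviality follows from a twin pair of size $\geq 2$ together with the existence of an edge (for $e_0$) or a non-edge (for $e_1$). Up to that point your write-up is a faithful, correct expansion of what the paper suppresses, and you deserve credit for noticing what the paper silently ignores: the argument genuinely breaks for edgeless and complete graphs.

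The gap is in how you ``dispose of'' those degenerate cases. Your patch does not prove the lemma as stated for them --- and no patch can, because the statement is \emph{false} there. The lemma requires the modules to be all classes of the $0$-equivalence or all classes of the $1$-equivalence; for $\overline{K_n}$, $n\geq 2$, the $0$-equivalence has a single class and the $1$-equivalence is the identity (dually for $K_n$), so both candidate decompositions are trivial even though the graph is reducible. Your ``obvious parallel (respectively series) manner'' decomposition uses modules that are \emph{not} twin classes, so it establishes only the weaker claim that some non-trivial modular decomposition exists; worse, even that weaker claim fails for $K_2$ and $\overline{K_2}$, since a non-trivial composition graph needs at least three vertices (so your ``edgeless graph on at least two vertices'' is wrong at $n=2$). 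Your side remark that the exceptions are ``exactly the graphs that fail to be simultaneously connected and coconnected'' is also inaccurate: the disjoint union of two edges is disconnected, yet your generic argument handles it via its $1$-twins. The honest resolution is to restate the lemma so as to exclude complete and edgeless graphs --- which is precisely what your generic argument proves --- and then to check the downstream use: Lemma~\ref{lm:parabolic} invokes Lemma~\ref{lm:reducible} for an arbitrary permutation graph that is not uniquely orientable, and $K_n$, $\overline{K_n}$ with $n\geq 3$ are such graphs. For these, Theorem~\ref{thm:perm} still holds because $\wl(X)$ is the trivial coherent configuration, hence separable, but that requires an explicit extra sentence which neither the paper nor your proof currently supplies.
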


Let $P_m$ denote a path graph on $m$ vertices.

\begin{lemma}\label{lm:uniquelyconnected} A uniquely orientable graph is either connected and coconnected or is $P_m$ or $\overline{P_m}$, $m=2,3$. In particular, an irreducible uniquely orientable graph is connected and coconnected.
\end{lemma}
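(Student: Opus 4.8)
The plan is to prove the contrapositive of the main dichotomy: if a uniquely orientable graph $X$ is not of the form $P_m$ or $\overline{P_m}$ with $m\in\{2,3\}$, then $X$ is both connected and coconnected. Since the class of permutation graphs is closed under complementation and the property of being uniquely orientable is manifestly self-complementary (it requires both $X$ and $\overline{X}$ to be UPO), it suffices to show that $X$ is connected; the coconnectedness will then follow by applying the same argument to $\overline{X}$, whose exclusions are exactly $\overline{P_m}$ and $P_m$.

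First I would suppose that $X=(\Omega,E)$ is uniquely orientable but disconnected, and let $\Omega=\Omega_1\sqcup\Omega_2\sqcup\cdots$ be its connected components, so that every edge lies inside a single component and no edge joins distinct components. The key observation is that in a disconnected graph, any two vertices $\alpha,\beta$ in \emph{different} components are non-adjacent and have exactly the same neighbourhood-separation from every third vertex lying outside their components; more usefully, I would examine the complement $\overline{X}$, where all the ``missing'' cross-component edges become present, so that $\overline{X}$ is a join of the complemented components and is therefore highly connected. The real leverage comes from counting transitive orientations: because each connected component can be transitively oriented independently (and its reversal is also transitive), a graph with $k\ge 2$ nontrivial components, or with components admitting independent orientation choices, will admit strictly more than two transitive orientations, contradicting the UPO hypothesis. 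I would make this precise using Lemma~\ref{lm:implicationclasses}: each implication class lies entirely within one component, and distinct components give rise to orientations that can be reversed independently, producing at least $2^k$ transitive orientations when there are $k$ components containing edges.

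The main case analysis then narrows the possibilities sharply. If $X$ is disconnected and UPO, the orientation-counting forces at most one component to contain any edge, i.e.\ all but one component is a single isolated vertex, and the edge-bearing component is itself UPO. Simultaneously, the condition \eqref{eq:uniquely} on $\overline{X}$ must be examined: since $X$ is disconnected, $\overline{X}$ contains a complete multipartite structure across components, and for $\overline{X}$ to be UPO this multipartite part must be extremely restricted. Working through the interplay of these two constraints---$X$ nearly edgeless outside one component, yet $\overline{X}$ also UPO---I expect to be driven to the conclusion that $\Omega$ has at most three vertices with a very rigid adjacency pattern, which upon inspection is exactly $P_2$, $P_3$, or their complements $\overline{P_2}=\overline{K_2}$ and $\overline{P_3}$. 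Here $\overline{P_2}$ is just two isolated vertices, and $\overline{P_3}$ is an edge plus an isolated vertex, which are precisely the small disconnected permutation graphs that survive the UPO test.

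The hard part will be the bookkeeping that rules out all disconnected configurations except these four small graphs: one must simultaneously track the number of transitive orientations of $X$ (bounded by the disjoint-component structure) and the UPO constraint on $\overline{X}$ via \eqref{eq:uniquely}, and show these are jointly satisfiable only in the listed exceptional cases. I anticipate the cleanest route is to argue that if $X$ is disconnected with at least four vertices, then either $X$ admits $\ge 3$ independent transitive orientations (when two components carry edges, or one component is not itself uniquely orientable) or else $\overline{X}$ fails \eqref{eq:uniquely} because the join structure across components creates additional implication classes in $\overline{X}$ that can be reversed freely. Once connectedness of $X$ (outside the exceptions) is established, applying the identical argument to $\overline{X}$ yields coconnectedness, and the final sentence of the lemma---that an \emph{irreducible} uniquely orientable graph is connected and coconnected---follows immediately, since each of $P_2,P_3,\overline{P_2},\overline{P_3}$ is reducible: $P_2$ and $\overline{P_2}$ have a pair of $1$-twins or $0$-twins, and $P_3,\overline{P_3}$ likewise contain a twin pair among their two degree-one or degree-zero vertices.
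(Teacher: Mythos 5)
Your first reduction is sound: transitive orientations of a disjoint union restrict to, and can be freely reassembled from, transitive orientations of the components, so a UPO graph has at most one edge-bearing component, and that component is itself UPO. But the entire content of the lemma lies in the case you leave unfinished: $X$ the disjoint union of one edge-bearing UPO component $Y$ and $t\ge 1$ isolated vertices. Here you only say you ``expect to be driven'' to the small exceptions and ``anticipate'' that $\overline{X}$ fails \eqref{eq:uniquely}; no argument is actually given that rules out, say, $X=P_3\sqcup K_1$ or $X=K_2\sqcup\overline{K_2}$ (both are disconnected, UPO, on four vertices, and both must be killed by the complement analysis). Worse, the mechanism you invoke for that step --- that the join structure of $\overline{X}$ ``creates additional implication classes that can be reversed freely'' --- rests on a false principle: implication classes of a comparability graph cannot in general be reversed independently. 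In $K_3$ every implication class is a single arc, giving three class pairs, yet only $6$ of the $2^3=8$ candidate orientations are transitive; the same failure occurs in $K_4$ minus an edge, which is exactly the complement of $K_2\sqcup\overline{K_2}$. What is true (and what the equivalence \eqref{eq:uniquely} already encodes, via the nontrivial theory behind Lemma~\ref{lm:implicationclasses}) is that a uniquely orientable graph has a single implication-class pair in $X$ and in $\overline{X}$; so the correct target is to exhibit \emph{two distinct class pairs} in $\overline{X}$, not to reverse classes freely.

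That missing step can in fact be completed along your lines. In $\overline{X}=\overline{Y}+K_t$ (a join), a $\Gamma$-step from an edge $(\alpha,\beta)$ only reaches edges sharing an endpoint with it whose other endpoint is \emph{non-adjacent} to $\alpha$ or $\beta$; since the $t$ former isolated vertices are adjacent in $\overline{X}$ to all other vertices, an edge inside $K_t$ (when $t\ge 2$) has a singleton implication class, and an edge inside $\overline{Y}$ (when $Y$ is not complete) has its implication class confined to $V(Y)\times V(Y)$. Either way the cross edges are not covered, so \eqref{eq:uniquely} fails. This forces $t=1$ and $Y$ complete, hence $Y=K_2$ and $X=\overline{P_3}$, with the edgeless case giving $\overline{P_2}$. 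For comparison, the paper reaches the same conclusion by a shorter route: the components of $X$ are modules of the connected graph $\overline{X}$, and by Golumbic's theorem that every partitive set of a UPO graph induces an empty subgraph \cite[Theorem~5.12]{G}, one gets $X=K_n\cup K_m$, whence $n!\,m!\le 2$. Your closing observation --- that $P_2,P_3,\overline{P_2},\overline{P_3}$ are all reducible, so the ``in particular'' clause follows --- is correct, but as submitted the proof has a genuine hole at its central step.
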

\begin{proof}
Without loss of generality, suppose that a uniquely orientable graph $X$ 
is not connected. Then $\overline{X}$ is connected and it is a composition graph 
with two modules, namely, the vertex set of any connected component of $X$ 
and its complement in~$\Omega$. By \cite[Theorem 5.12]{G}, every partitive set of 
a UPO graph induces an empty graph. It follows that $X=K_n\cup K_m$ for some integers $n,m$. 
Since $X$ is a UPO graph, it follows that $n\cdot m\leq 2$, and we are done.   
\end{proof}

For a graph $X=(\Omega,E)$ and $\mathbf{e}\in E\cup \overline{E}$, 
let us denote by $\Omega(\mathbf{e})$ the set of vertices incident 
to at least one element from $I(\mathbf{e})$, and by $X(\mathbf{e})$ 
the subgraph induced by $\Omega(\mathbf{e})$ in $X$ or in $\overline{X}$
depending on whether $\mathbf{e}\in E$ or $\mathbf{e}\in \overline{E}$.

\begin{corollary}\label{coro:Omega}
Let $X=(\Omega,E)$ be an irreducible uniquely orientable graph. 
Then $\Omega=\Omega(\mathbf{e})$ for every $\mathbf{e}\in E\cup \overline{E}$.
\end{corollary}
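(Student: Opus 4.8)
The plan is to deduce this corollary directly from Lemma~\ref{lm:uniquelyconnected} together with the defining property~\eqref{eq:uniquely} of uniquely orientable permutation graphs. Fix an element $\mathbf{e}\in E\cup\overline{E}$; by the convention introduced just before Lemma~\ref{lm:implicationclasses}, the implication class $I(\mathbf{e})$ is taken in $X$ if $\mathbf{e}\in E$ and in $\overline{X}$ if $\mathbf{e}\in\overline{E}$, and $X(\mathbf{e})$ is the subgraph induced by $\Omega(\mathbf{e})$ in the corresponding graph. Since the roles of $X$ and $\overline{X}$ are symmetric (both are UPO because $X$ is uniquely orientable, and $\overline{\overline{X}}=X$), it suffices to treat the case $\mathbf{e}\in E$ and show that every vertex $\gamma\in\Omega$ lies in $\Omega(\mathbf{e})$, i.e.\ is incident to some edge of $I_X(\mathbf{e})$.

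First I would record what irreducibility and unique orientability buy us via Lemma~\ref{lm:uniquelyconnected}: an irreducible uniquely orientable graph is connected \emph{and} coconnected (the degenerate paths $P_m,\overline{P_m}$ with $m=2,3$ are all reducible, having twins, so they are excluded once $X$ is irreducible). Hence both $X$ and $\overline{X}$ are connected. The key structural input is then the partition $\Omega^2\setminus 1_\Omega = I_X(\mathbf{e})\cup I_X(\mathbf{e})^*\cup I_{\overline{X}}(\mathbf{f})\cup I_{\overline{X}}(\mathbf{f})^*$ from~\eqref{eq:uniquely}, valid for any $\mathbf{e}\in E$ and any $\mathbf{f}\in\overline{E}$. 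This says that, up to reversal, there is exactly one implication class of $X$ (namely $I_X(\mathbf{e})$) and exactly one of $\overline{X}$ (namely $I_{\overline{X}}(\mathbf{f})$); by Lemma~\ref{lm:implicationclasses}(i) the pieces $I_X(\mathbf{e})$ and $I_X(\mathbf{e})^*$ are disjoint, so $I_X(\mathbf{e})\cup I_X(\mathbf{e})^* = E$ and likewise $I_{\overline{X}}(\mathbf{f})\cup I_{\overline{X}}(\mathbf{f})^*=\overline{E}$.

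The decisive step is the observation that $\Omega(\mathbf{e})$, the vertex set spanned by $I_X(\mathbf{e})$, equals the vertex set spanned by all of $E$, which is the set of non-isolated vertices of $X$. Indeed, since $I_X(\mathbf{e})\cup I_X(\mathbf{e})^* = E$ and $I_X(\mathbf{e})^*$ is just the reversal of $I_X(\mathbf{e})$, a vertex is incident to an edge of $I_X(\mathbf{e})$ precisely when it is incident to some edge of $E$ at all. Because $X$ is connected and $|\Omega|\geq 2$ (an irreducible graph cannot be a single vertex), $X$ has no isolated vertices, so every vertex of $\Omega$ is incident to an edge of $E$, hence to an edge of $I_X(\mathbf{e})$; therefore $\Omega(\mathbf{e})=\Omega$. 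The symmetric argument applied to $\overline{X}$, which is connected by coconnectedness, handles the case $\mathbf{e}\in\overline{E}$ and gives $\Omega(\mathbf{e})=\Omega$ there as well.

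The main obstacle I anticipate is the bookkeeping around the convention that $I(\mathbf{e})$ and $X(\mathbf{e})$ switch between $X$ and $\overline{X}$, and making sure that~\eqref{eq:uniquely} is invoked with a legitimate witness in the other graph; one must confirm that a uniquely orientable irreducible graph genuinely has edges in both $E$ and $\overline{E}$ so that both $\mathbf{e}$ and $\mathbf{f}$ exist, which follows from connectedness and coconnectedness together with $|\Omega|\geq 2$. Once this is clear, the rest is the short deduction above: the single-implication-class structure forces $I_X(\mathbf{e})$ to touch every non-isolated vertex, and connectedness removes isolated vertices.
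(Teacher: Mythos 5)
Your proof is correct and follows essentially the same route as the paper's: Lemma~\ref{lm:uniquelyconnected} (irreducibility excludes $P_m$ and $\overline{P_m}$, so $X$ and $\overline{X}$ are connected) combined with Eq.~\eqref{eq:uniquely}, with your write-up merely spelling out the step the paper leaves implicit, namely that \eqref{eq:uniquely} forces $I_X(\mathbf{e})\cup I_X(\mathbf{e})^*=E$ and hence $\Omega(\mathbf{e})$ equals the set of non-isolated vertices of the relevant graph. One small slip: a one-vertex graph \emph{is} irreducible under the paper's definition (the twin conditions are vacuous), so your parenthetical justification of $|\Omega|\ge 2$ is incorrect; this is harmless, however, since the existence of $\mathbf{e}\in E\cup\overline{E}$ already forces $|\Omega|\ge 2$.
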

\begin{proof}
Since $P_m$ and $\overline{P_m}$, $m=2,3$, are not irreducible, 
$X$ and $\overline{X}$ are connected by Lemma \ref{lm:uniquelyconnected}. 
The lemma then follows from Eq. \eqref{eq:uniquely}. 
\end{proof}

In the rest of this section, we show that an irreducible permutation graph 
is a certain composition graph; this observation was implicitly used in \cite{C1981}. 
Given a graph $X=(\Omega,E)$, define a binary relation $\sim$ on $\Omega$ 
as follows: $\alpha\sim\beta$ if and only if $\alpha=\beta$ or 
there is an edge $\mathbf{e}\in E$ 
such that the graph $X(\mathbf{e})$ is uniquely orientable and $\alpha,\beta\in \Omega(\mathbf{e})$.

\begin{lemma}\label{lm:Colbourn}
Let $X$ be an irreducible permutation graph that is not uniquely orientable. 
Then the relation $\sim$ defined by $X$ is a non-trivial equivalence relation, 
the classes of which induce uniquely orientable graphs. 
\end{lemma}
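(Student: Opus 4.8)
The plan is to show that $\sim$ is reflexive, symmetric, and transitive, that its classes induce uniquely orientable subgraphs, and finally that the relation is non-trivial (neither the identity nor the full $\Omega^2$). Reflexivity and symmetry are immediate from the definition, since $\alpha\sim\alpha$ by fiat and the condition ``$\alpha,\beta\in\Omega(\mathbf{e})$'' is symmetric in $\alpha,\beta$. The work lies in transitivity together with the claim about the induced subgraphs, and these two facts should be proved together because they interact: the natural way to establish transitivity is to show that whenever $\alpha,\beta\in\Omega(\mathbf{e})$ and $\beta,\gamma\in\Omega(\mathbf{f})$ with $X(\mathbf{e})$ and $X(\mathbf{f})$ both uniquely orientable, the two uniquely orientable subgraphs $X(\mathbf{e})$ and $X(\mathbf{f})$ must in fact coincide (or at least lie in a common uniquely orientable subgraph of the form $X(\mathbf{g})$), so that $\alpha$ and $\gamma$ witness each other through a single edge.

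First I would analyze the subgraphs $X(\mathbf{e})$ for $\mathbf{e}\in E$ such that $X(\mathbf{e})$ is uniquely orientable. The key structural input is Lemma~\ref{lm:implicationclasses}: the implication class $I_X(\mathbf{e})$ is a ``rigid'' object, disjoint from its reverse, and in any transitive orientation it appears entirely or not at all. For a uniquely orientable $X(\mathbf{e})$, Corollary~\ref{coro:Omega} (applied inside $X(\mathbf{e})$, after checking irreducibility of $X(\mathbf{e})$, or more robustly Eq.~\eqref{eq:uniquely}) forces the implication class generated by $\mathbf{e}$ to touch every vertex of $\Omega(\mathbf{e})$, so $\Omega(\mathbf{e})$ is tightly controlled by a single implication class. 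I would then argue that two such vertex sets $\Omega(\mathbf{e})$ and $\Omega(\mathbf{f})$ that share a vertex $\beta$ cannot be ``partially overlapping'': if they overlapped without being equal, one could combine a transitive orientation of $X$ restricted to these sets in two incompatible ways, contradicting the uniqueness of the orientation on the uniquely orientable pieces, or alternatively one would produce a forbidden configuration (such as an induced $P_4$ or a non-UPO subgraph) violating the hypothesis that $X$ itself is a permutation graph. This ``overlap implies equality'' step is the main obstacle, and I expect it to require careful bookkeeping with the implication-class machinery and with the definition of $\Omega(\mathbf{e})$ as the set of endpoints of edges in $I_X(\mathbf{e})$.

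Granting that overlapping sets coincide, transitivity follows: if $\alpha\sim\beta$ via $\mathbf{e}$ and $\beta\sim\gamma$ via $\mathbf{f}$, then $\Omega(\mathbf{e})=\Omega(\mathbf{f})=:\Delta$ contains all of $\alpha,\beta,\gamma$, whence $\alpha\sim\gamma$ via $\mathbf{e}$. The same conclusion shows that each $\sim$-class $\Delta$ is exactly $\Omega(\mathbf{e})$ for a suitable edge $\mathbf{e}$, so the induced graph $X_{\Delta}=X(\mathbf{e})$ is uniquely orientable by construction, settling the second assertion.

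It remains to verify that $\sim$ is non-trivial. It is not the identity: since $X$ is irreducible and not uniquely orientable, Lemma~\ref{lm:uniquelyconnected} gives that $X$ (and $\overline{X}$) is connected and coconnected, and a standard fact from the theory of implication classes (cf.\ \cite[Chapter~5]{G}) guarantees the existence of some edge $\mathbf{e}$ with $X(\mathbf{e})$ uniquely orientable and $|\Omega(\mathbf{e})|\geq 2$, producing at least one nontrivial relation $\alpha\sim\beta$. And it is not all of $\Omega^2$: if every pair were $\sim$-related, then $\Omega=\Omega(\mathbf{e})$ for a single uniquely orientable $X(\mathbf{e})$, forcing $X=X(\mathbf{e})$ to be uniquely orientable, contrary to hypothesis. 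Hence $\sim$ is a non-trivial equivalence relation whose classes induce uniquely orientable graphs, which is exactly the claim.
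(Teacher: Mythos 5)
Your proposal reproduces the skeleton of the paper's argument: everything hinges on (a) the fact that for edges $\mathbf{e},\mathbf{e}'$ with $X(\mathbf{e})$ and $X(\mathbf{e}')$ uniquely orientable, the sets $\Omega(\mathbf{e})$ and $\Omega(\mathbf{e}')$ are equal or disjoint (giving transitivity and the claim about classes), and (b) the existence of at least one edge $\mathbf{e}$ with $X(\mathbf{e})$ uniquely orientable (giving, together with $X\ne X(\mathbf{e})$, non-triviality). The paper gets both facts by citation: (a) is exactly \cite[Lemma~6]{C1981}, and (b) is \cite[Lemma~5]{C1981} combined with irreducibility of $X$. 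You, by contrast, undertake to prove them, and this is where the proposal has a genuine gap: you explicitly flag (a) as ``the main obstacle'' and offer only two speculative strategies, neither of which is carried out and neither of which clearly works. The first (combining the restrictions of a transitive orientation of $X$ to the two overlapping sets ``in two incompatible ways'') names no concrete contradiction: two properly overlapping uniquely orientable induced subgraphs do not by themselves conflict with uniqueness of orientation of either piece, and the incompatibility has to be manufactured via the implication-class machinery (the triangle lemma, etc.), which is precisely the content of Colbourn's lemma. The second strategy is actually wrong as stated: an induced $P_4$ cannot serve as a forbidden configuration, since $P_4$ is itself a permutation graph (it is self-complementary and transitively orientable); the excluded-induced-subgraph characterization of permutation graphs (Gallai) is far more complicated and is not the tool needed here.

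The same criticism applies, more mildly, to (b): the ``standard fact'' you invoke is exactly \cite[Lemma~5]{C1981}, a real lemma rather than a formality, and it is where irreducibility enters (connectivity alone, which is all you extract from Lemma~\ref{lm:uniquelyconnected}, does not suffice). Two smaller points: applying Corollary~\ref{coro:Omega} ``inside $X(\mathbf{e})$'' is illegitimate without further argument, since an induced subgraph of an irreducible graph need not be irreducible --- your fallback to Eq.~\eqref{eq:uniquely} is the correct move, but it silently uses the (easy, yet unproved) observation that implication classes of $X(\mathbf{e})$ computed inside the induced subgraph coincide with those computed in $X$. In summary: the architecture of your proof is the right one and matches the paper, but the two load-bearing facts are exactly the ones left unproved, and the proof sketches offered for them would not succeed as described; to close the gap you should either cite \cite[Lemmas 5 and 6]{C1981} as the paper does, or develop the implication-class arguments (in the spirit of \cite[Chapter~5]{G}) in full.
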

\begin{proof}
It follows from \cite[Lemma~6]{C1981} that whenever, for $\mathbf{e},\mathbf{e}'\in E$, 
the subgraphs $X(\mathbf{e})$ and $X(\mathbf{e}')$ are uniquely orientable, 
the sets $\Omega(\mathbf{e})$ and $\Omega(\mathbf{e}')$ are either equal or disjoint. 
Therefore, $\sim$ is an equivalence relation whose 
equivalence classes induce uniquely orientable graphs. Moreover, 
by \cite[Lemma~5]{C1981} and the fact that $X$ is irreducible, there exist  
an edge $\mathbf{e}$ such that the subgraph $X(\mathbf{e})$ is uniquely
orientable. 
Since $X$ is not uniquely orientable, $X\ne X(\mathbf{e})$ 
and we are done.
\end{proof}

\begin{lemma}\label{lm:interspace}
In the notation of Lemma \ref{lm:Colbourn}, 
$X$ is a non-trivial composition graph with respect 
to the equivalence relation $\sim$.  
\end{lemma}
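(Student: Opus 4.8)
The plan is to show that the equivalence relation $\sim$ from Lemma~\ref{lm:Colbourn} satisfies Eq.~\eqref{eq:eE}, which by Lemma~\ref{lm:eE} is exactly the statement that $X$ is a composition graph with respect to $\sim$; the non-triviality was already established in Lemma~\ref{lm:Colbourn}. By the definition of the composition graph with respect to $\sim$, and the discussion following Lemma~\ref{lm:eE}, it suffices to verify that for any two distinct classes $\Delta,\Gamma\in\Omega/{\sim}$ the induced bipartite graph $(\Delta\cup\Gamma,E_{\Delta,\Gamma})$ is either empty or complete bipartite. Concretely, I would fix $\alpha,\alpha'\in\Delta$ and $\beta\in\Gamma$ with $(\alpha,\beta)\in E$, and aim to prove $(\alpha',\beta)\in E$; symmetry and transitivity of the argument then yield the full claim.

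First I would unpack what membership in a common $\sim$-class means: since $\alpha\sim\alpha'$, there is an edge $\mathbf{e}\in E$ with $X(\mathbf{e})$ uniquely orientable and $\alpha,\alpha'\in\Omega(\mathbf{e})$. The set $\Omega(\mathbf{e})$ consists of the vertices incident to the implication class $I(\mathbf{e})$, and by Corollary~\ref{coro:Omega} the uniquely orientable graph $X(\mathbf{e})$ is an irreducible (connected and coconnected) graph on $\Omega(\mathbf{e})$ once we restrict attention to it. The key geometric input is the $\Gamma$-relation: two edges sharing a vertex and whose other endpoints are non-adjacent are $\Gamma$-related, hence lie in the same implication class. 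I would use this to propagate the adjacency status of $\beta$ across $\Omega(\mathbf{e})$. Specifically, because $\Delta=\Omega(\mathbf{e})$ is a module candidate, I want to show that the external vertex $\beta$ cannot ``distinguish'' $\alpha$ from $\alpha'$, i.e. $\beta$ is adjacent to all or none of $\Omega(\mathbf{e})$.

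The heart of the argument is to connect $\alpha$ and $\alpha'$ by a chain of edges inside the implication class $I(\mathbf{e})$ and track, along this chain, whether the fixed external vertex $\beta$ is adjacent to the successive endpoints. When two edges $(\gamma,\delta)$ and $(\gamma,\delta')$ are $\Gamma$-related through the shared vertex $\gamma$ (so $\delta,\delta'$ are non-adjacent), the fact that $\beta\notin\Omega(\mathbf{e})$ — meaning $\beta$ is incident to no edge of $I(\mathbf{e})$ — forces the edges $(\beta,\delta)$ and $(\beta,\delta')$ (if present) to lie in a different implication class, and one exploits the $\Gamma$-relation together with transitive orientability to conclude $\beta$ treats $\delta$ and $\delta'$ uniformly. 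Iterating along the chain moves the uniform adjacency from $\alpha$ to $\alpha'$. I would invoke Lemma~\ref{lm:implicationclasses} to control how a transitive orientation must contain an entire implication class or its reverse, which pins down the local edge directions and prevents the forbidden ``mixed'' configuration. The complementary case $\mathbf{f}\in\overline{E}$ is handled by the same reasoning applied to $\overline{X}$, using that $X$ is a permutation graph so that $\overline{X}$ is transitively orientable as well.

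The main obstacle, I expect, is the bookkeeping in the propagation step: one must argue that adjacency of the external vertex $\beta$ to the whole class $\Delta=\Omega(\mathbf{e})$ is constant, and the cleanest route is probably to show directly that each $\sim$-class is a partitive (module) set rather than to manipulate Eq.~\eqref{eq:eE} elementwise. A slick alternative would be to prove that if $\beta\in\Gamma$ were adjacent to $\alpha\in\Delta$ but not to $\alpha'\in\Delta$, then the edge $(\alpha,\beta)$ and the non-edge $(\alpha',\beta)$ would force, via the $\Gamma$-relation, either $\beta\in\Omega(\mathbf{e})$ (contradicting $\beta\notin\Delta$) or a violation of the unique orientability of $X(\mathbf{e})$ recorded in Eq.~\eqref{eq:uniquely}. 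Making this dichotomy airtight — ensuring the implication class of $(\alpha,\beta)$ genuinely reaches back into $I(\mathbf{e})$ — is where the delicate case analysis sits, but once each class is shown to be a module, Lemma~\ref{lm:eE} delivers the composition-graph conclusion immediately, and the non-triviality follows since $X$ is not itself uniquely orientable.
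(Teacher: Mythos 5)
Your proposal follows the same reduction as the paper's proof: show that every class of $\sim$ is a partitive set of $X$ (singleton classes trivially, and a non-singleton class because it equals $\Omega(\mathbf{e})$ for some $\mathbf{e}\in E$ with $X(\mathbf{e})$ uniquely orientable), then conclude via Lemma \ref{lm:eE}, with non-triviality coming from Lemma \ref{lm:Colbourn}. The difference lies in the key step: the paper disposes of the partitivity of $\Omega(\mathbf{e})$ by citing Colbourn (\cite[Lemma~3]{C1981}), whereas you attempt to re-prove it via the $\Gamma$-relation. Your argument does work, and the ``delicate case analysis'' you worry about at the end is actually a non-issue, because the dichotomy in your last paragraph always resolves into its first branch. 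Concretely: suppose $\beta\notin\Omega(\mathbf{e})$ and let $(\gamma,\delta)\in I(\mathbf{e})$ be an arc such that $\beta$ is adjacent to exactly one of $\gamma,\delta$. If $\beta$ is adjacent to $\gamma$ only, then the arcs $(\gamma,\beta)$ and $(\gamma,\delta)$ share their first coordinate and have non-adjacent second coordinates, hence are $\Gamma$-related, so $(\gamma,\beta)\in I(\mathbf{e})$ and $\beta\in\Omega(\mathbf{e})$, a contradiction; if $\beta$ is adjacent to $\delta$ only, apply the same reasoning to the arcs $(\beta,\delta)$ and $(\gamma,\delta)$, which share their second coordinate. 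Since $\Gamma$-related arcs always share a vertex, the arcs of $I(\mathbf{e})$ span a connected graph on $\Omega(\mathbf{e})$, so this local statement propagates along a path from $\alpha$ to $\alpha'$ and yields the module property. Note that this argument needs neither transitive orientability nor unique orientability: your appeals to Lemma \ref{lm:implicationclasses}, to Eq. \eqref{eq:uniquely}, and to Corollary \ref{coro:Omega} (which, incidentally, asserts $\Omega=\Omega(\mathbf{e})$ for an irreducible uniquely orientable graph, not that $X(\mathbf{e})$ is irreducible) are all superfluous here, as partitivity of $\Omega(\mathbf{e})$ holds for every edge of every graph. In summary: the paper's citation keeps its proof to three lines; your route makes the lemma self-contained at the cost of re-proving Colbourn's lemma, and once the propagation step is written as above it is airtight.
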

\begin{proof} 
It suffices to show that every $\Delta\in \Omega/{\sim}$ is a partitive set of $X$. 
This is obviously true if $|\Delta|=1$. If $|\Delta|>1$, then, by the definition of $\sim$, 
there exists $\mathbf{e}\in E$ such that $\Delta=\Omega(\mathbf{e})$. 
It follows from \cite[Lemma~3]{C1981} that the set $\Omega(\mathbf{e})$ is partitive. 
Therefore, $X$ a composition graph with respect to the equivalence relation $\sim$. 
By Lemma \ref{lm:Colbourn} this relation is non-trivial, hence 
so is the composition graph.
\end{proof}

\begin{remark}
Every module of the composition graph in Lemma \ref{lm:interspace} 
either induces a uniquely orientable non-empty subgraph of $X$ or is a singleton.  
\end{remark}

Note that the composition graph defined in Lemma \ref{lm:interspace} 
is minimal with respect to $|\cC|$ among all composition graphs $X=X_0[\cC]$ 
where $X_0$ is a permutation graph and $\cC$ is a collection of uniquely 
orientable subgraphs of $X$.

\begin{lemma}{\rm (cf. \cite[Theorem~5.8]{G})}\label{lm:betweenmodules} 
Let $X$ be the composition graph as in Lemma \ref{lm:interspace}. 
Then for all distinct modules $\Delta,\Delta'$ and 
every $\mathbf{e}\in \Delta\times \Delta'$, 
    the implication class $I(\mathbf{e})$ intersects neither 
    $\Delta\times \Delta$ nor $\Delta'\times \Delta'$, and, moreover,  
$\Delta\times \Delta'\subseteq I(\mathbf{e})$ and 
$\Delta'\times\Delta\subseteq I(\mathbf{e})^*$.
\end{lemma}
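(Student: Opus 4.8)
The plan is to treat the two assertions separately, after a common reduction and a structural observation about the modules.

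First I would reduce to the case $\mathbf{e}\in E$. Since the modules of $X$ coincide with those of $\overline{X}$ and unique orientability is preserved under complementation, the relation $\sim$ and the whole modular decomposition of Lemma~\ref{lm:interspace} are complement-invariant; as $I(\mathbf{e})$ is computed in $X$ or in $\overline{X}$ according to whether $\mathbf{e}\in E$ or $\mathbf{e}\in\overline{E}$, replacing $X$ by $\overline{X}$ lets me assume $\mathbf{e}=(\alpha,\beta)\in E$ with $\alpha\in\Delta$ and $\beta\in\Delta'$. Because $\Delta,\Delta'$ are distinct modules and $(\alpha,\beta)$ is an edge, the two modules are completely joined, i.e.\ $\Delta\times\Delta'\subseteq E$ and $\Delta'\times\Delta\subseteq E$. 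The key preliminary fact is that every module of size greater than $1$ induces a \emph{connected and coconnected} graph: by the remark following Lemma~\ref{lm:interspace} such a module induces a uniquely orientable graph, and by Lemma~\ref{lm:uniquelyconnected} this graph is connected and coconnected unless it is one of $P_m,\overline{P_m}$ with $m=2,3$; but each of these four graphs has a pair of $0$-twins or $1$-twins, and since a module is partitive these would be $0$- or $1$-twins of $X$, contradicting the irreducibility of $X$.

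Next I would prove the inclusion $\Delta\times\Delta'\subseteq I(\mathbf{e})$ by a connectivity argument inside the $\Gamma$-relation, recalling that $I(\mathbf{e})=I_X(\mathbf{e})\subseteq E$. Fix an arbitrary $(\alpha',\beta')\in\Delta\times\Delta'$. Since $\overline{X_{\Delta'}}$ is connected, there is a walk $\beta=\gamma_0,\gamma_1,\ldots,\gamma_k=\beta'$ in $\Delta'$ along non-edges of $X$; as $\Delta\times\Delta'\subseteq E$, every $(\alpha,\gamma_i)$ is an edge, and $(\alpha,\gamma_i)\,\Gamma\,(\alpha,\gamma_{i+1})$ holds because the first coordinates agree while $(\gamma_i,\gamma_{i+1})\notin E$. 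Hence $(\alpha,\beta)$ and $(\alpha,\beta')$ lie in $I(\mathbf{e})$. A symmetric walk $\alpha=\delta_0,\ldots,\delta_l=\alpha'$ along non-edges inside the coconnected graph $X_\Delta$ then shows $(\alpha,\beta')$ and $(\alpha',\beta')$ lie in the same implication class, so $(\alpha',\beta')\in I(\mathbf{e})$. Taking reversals gives $\Delta'\times\Delta=(\Delta\times\Delta')^*\subseteq I(\mathbf{e})^*$.

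Finally I would establish that $I(\mathbf{e})$ avoids every ``interior'' set $\Delta''\times\Delta''$ (in particular $\Delta\times\Delta$ and $\Delta'\times\Delta'$) by showing that the property ``the two endpoints lie in distinct modules'' is invariant under a single $\Gamma$-step, hence under the transitive closure defining $I(\mathbf{e})$. Suppose $(u,v)\in E$ has endpoints in distinct modules and $(u,v)\,\Gamma\,(u',v')$; by symmetry assume $u'=u$ and $(v,v')\notin E$. If $u$ and $v'$ were in a common module $\Delta_1$, then $v$ would lie in some other module $\Delta_2$ with $(u,v)\in E$, so $\Delta_1$ and $\Delta_2$ would be completely joined and thus $(v',v)\in E$, contradicting $(v,v')\notin E$. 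Therefore $(u',v')$ again has endpoints in distinct modules, and since $\mathbf{e}=(\alpha,\beta)$ has this property, so does every element of $I(\mathbf{e})$. The main obstacle, and the place where irreducibility is genuinely used, is the structural preliminary above: the chain argument breaks down for a module that is not coconnected, so I must rule out the exceptional uniquely orientable graphs $P_m,\overline{P_m}$ as modules. Once coconnectedness of the modules is in hand, both the inclusion and the invariance arguments are short, the latter needing only the elementary fact that two completely joined modules leave no non-edge between them.
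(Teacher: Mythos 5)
Your proof is correct, and it diverges from the paper's in one substantive place. The inclusion $\Delta\times \Delta'\subseteq I(\mathbf{e})$ is established essentially as in the paper: both arguments first secure coconnectedness of the module subgraphs (you justify it the same way the paper implicitly does, by lifting twins through partitivity to contradict irreducibility of $X$, thereby excluding $P_m$ and $\overline{P_m}$), and then chain $\Gamma$-steps along non-edge walks, first inside $\Delta'$ with the first coordinate fixed, then inside $\Delta$ with the second coordinate fixed; the paper phrases the conclusion via ``two implication classes that meet must coincide'' while you run a single two-stage walk, which is the same computation. Where you genuinely differ is the disjointness assertion. The paper argues that if $I(\mathbf{e})$ contained some $\mathbf{f}\in\Delta\times\Delta$, then $I(\mathbf{f})\subseteq\Delta\times\Delta$ by the definition of $\sim$ and Eq.~\eqref{eq:uniquely}, whence $I(\mathbf{e})=I(\mathbf{f})\subseteq\Delta\times\Delta$ by the coincide-or-disjoint dichotomy, a contradiction. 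You instead prove that a single $\Gamma$-step preserves the property ``the endpoints lie in distinct modules'', using only the complete-join property of a composition graph. This is more elementary and strictly more general: it needs neither unique orientability of the modules nor any property of implication classes, so it applies to an arbitrary modular partition of an arbitrary graph, and it yields at once that $I(\mathbf{e})$ avoids $\Delta''\times\Delta''$ for \emph{every} module $\Delta''$. The paper's version is shorter given the machinery already in place; yours is self-contained.

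One caveat: your opening reduction to $\mathbf{e}\in E$ asserts that the relation $\sim$ of Lemma~\ref{lm:Colbourn} is complement-invariant, i.e., that $X$ and $\overline{X}$ determine the same partition. That claim is plausible but is neither proved in the paper nor actually needed by you. To handle $\mathbf{e}\in\overline{E}$ it suffices to run both of your arguments in $\overline{X}$ with the partition $\Omega/{\sim}$ held fixed (this is what the paper means by ``the case $\mathbf{e}\in\overline{E}$ is similar''): indeed, $\overline{X}$ is a composition graph with respect to the same partition, and connectedness and coconnectedness of the module subgraphs are complement-invariant, which is all your walks and your $\Gamma$-step invariance use. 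So keep the symmetric argument, but drop the unproved invariance of $\sim$.
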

\begin{proof}
Assume on the contrary that $I(\mathbf{e})$ contains, say, $\mathbf{f}\in \Delta\times \Delta$. Then $I(\mathbf{f})\subseteq \Delta\times \Delta$ 
by the definition of $\sim$ and Eq. \eqref{eq:uniquely}. 
Since two implication classes either coincide or are disjoint, 
we obtain $I(\mathbf{e})=I\left(\mathbf{f}\right)\subseteq \Delta\times \Delta$, 
a contradiction.

Further, let $\mathbf{e}=(\alpha,\beta)$ and $\mathbf{e}\in E$ 
(the case $\mathbf{e}\in \overline{E}$ is similar). 
Then $\{\alpha\}\times \Delta'\subseteq E_{\Delta,\Delta'}$. 
Note the complement of $X_{\Delta'}$ is connected by Lemma \ref{lm:uniquelyconnected}, 
since the graph $X$ is irreducible and so is $X_{\Delta'}$. 
Therefore, for every $\beta'\in \Delta'$, there is a path from 
$\beta$ to $\beta'$ in the complement of $X_{\Delta'}$. 
This implies that $(\alpha,\beta')\in I(\mathbf{e})$ 
and hence we obtain $\{\alpha\}\times \Delta'\subseteq I(\mathbf{e})$. 
Similarly, one can see that 
$\Delta\times \{\beta\} \subseteq I(\mathbf{e})$. 
Thus, the implication class $I(\mathbf{e})$ 
intersects non-trivially the implication class 
$I(\mathbf{e'})$ of every other edge $\mathbf{e'}\in E_{\Delta,\Delta'}$, 
and hence $I(\mathbf{e})=I(\mathbf{e'})$. 
This implies $E_{\Delta,\Delta'}\subseteq I(\mathbf{e})$ and, by symmetry, 
$E_{\Delta',\Delta}\subseteq I(\mathbf{e})^*$.
%
\end{proof}

\section{Coherent configurations and their extensions}\label{sect:cc&extensions}
In this section we provide a short background of the theory of coherent configurations. 
We mainly follow the notation and terminology from~\cite{CP}, 
where further details and all unexplained facts can be found.

\subsection{Basic definitions} Let $\Omega$ be a finite set and $S$ a partition of $\Omega^2$; in particular, the elements of $S$ are treated as binary relations on~$\Omega$. A pair $\mathcal{X}=(\Omega,S)$ is called a {\bf coherent configuration} on $\Omega$ if the following conditions are satisfied:
\nmrt
\tm{C1}  the diagonal relation $1_{\Omega}$ 
is a union of some relations of~$S$,
\tm{C2} for each $s\in S$, the relation $s^*$ belongs to~$S$,
\tm{C3} given $r,s,t\in S$, the number $c_{rs}^t=|\alpha r\cap \beta s^{*}|$ does not depend on $(\alpha,\beta)\in t$. 
\enmrt

In what follows, we write $S=S(\cX)$; any relation belonging to $S$ (respectively, $S^\cup$) 
is called a {\bf basis relation} (respectively, a {\bf relation} of~$\cX$). 
A set $\Delta \subseteq \Omega$ is called a {\bf fiber} of $\cX$ if the relation $1_{\Delta}$
is basis. 
The set of all fibers is denoted by $F=F(\cX)$. Any element of $F^\cup$ is called 
a {\bf homogeneity set} of $\cX$. In particular, the (left, right) support of every 
relation of $\cX$ is a homogeneity set. 

\subsection{Isomorphisms and separability} Let $\cX=(\Omega,S)$ and $\cX'=(\Omega',S')$ be two coherent configurations. A bijection $f\colon\Omega\to\Omega'$ is called a (combinatorial) {\bf isomorphism} from $\cX$ to $\cX'$ if the relation $s^f$
belongs to $S'$ for every $s\in S$. The combinatorial isomorphism $f$ induces a natural bijection $\varphi\colon S\to S'$, $s\mapsto s^f$. One can see that $\varphi$  preserves the numbers from the condition~(C3), namely, the numbers $c_{rs}^t$ and $c_{r^{\varphi},s^{\varphi}}^{t^{\varphi}}$ are equal  for all $r,s,t\in S$. Every bijection $\varphi\colon S\to S'$ having this property is called an {\bf algebraic isomorphism}, written as $\varphi\colon \cX\to \cX'$. A coherent configuration is called {\bf separable} if every algebraic isomorphism from it to another coherent configuration is induced by an isomorphism. 

An algebraic isomorphism $\varphi\colon \cX\to\cX'$ induces a uniquely determined bijection $S^\cup\to {S'}^\cup$ denoted also by $\varphi$. For any $\Delta\in F^\cup$, we have $\varphi(1_\Delta)=1_{\Delta^\varphi}$ for a uniquely determined $\Delta^\varphi\in {F'}^\cup$. The induced mapping $\Delta\mapsto\Delta^\varphi$ defines a bijection $F^\cup \to {F'}^\cup$ that takes $F$ to $F'$.

\subsection{Parabolics.}\label{ssect:parabolics}
An equivalence relation $e$ on a set $\Delta\subseteq\Omega$ is called a {\bf partial parabolic} of the coherent configuration~$\cX$ if $e$ is the union of some basis relations; if, in addition, $\Delta=\Omega$, then $e$ is called a {\bf parabolic} of~$\cX$. Note that the transitive closure of any symmetric relation of $\cX$ is a partial parabolic.

A (partial) parabolic $e$ is said to be {\bf decomposable} if $e$ is the  union of pairwise disjoint non-empty partial parabolics; we say that $e$ is {\bf indecomposable} if it is not decomposable. Every partial parabolic is the disjoint union of uniquely determined indecomposable partial parabolics; they are called the {\bf indecomposable components} of~$e$.

Let $e$ be a partial parabolic, and let $\Delta\in\Omega/e$. Denote by $S_{\Omega/e}$ and $S_\Delta$ the sets of all non-empty relations 
$$
s\sbo{\Omega/e}=\{(\Delta,\Gamma)\in\Omega/e\times\Omega/e\colon\ s\sbs{\Delta}{\Gamma}\ne\varnothing\}\quad\text{and}\quad s\sbo{\Delta}=s\sbo{\Delta,\Delta},
$$
respectively, where $s\in S$. Then the pairs $\cX_{\Omega/e}=(\Omega/e,S_{\Omega/e})$ and $\cX_\Delta=(\Delta,S_\Delta)$ are coherent configurations called the {\bf quotient} of~$\cX$ modulo~$e$ and {\bf restriction} of~$\cX$ to~$\Delta$. Note that when $e=1_\Delta$ for a homogeneity set $\Delta$, we have $\cX_{\Omega/e}=\cX_\Delta$.

Let $e^{\circ}$ be an indecomposable component of $e$. 
Then $e^{\circ}_{\Omega/e}$ is a reflexive relation of $\cX_{\Omega/e}$, 
that is, $e^{\circ}_{\Omega/e}=1_{\Delta^{\circ}}$ where 
$\Delta^{\circ}$ is a homogeneity set of $\cX_{\Omega/e}$. 
Let $\Pi(e)$ denote the set of all $\Delta^{\circ}$ as 
$e^{\circ}$ runs over the set of all indecomposable component of $e$. 
Note that $\Pi(e)$ is a partition of $\Omega/e$, while the corresponding 
equivalence relation is a parabolic of $\cX_{\Omega/e}$.

Any algebraic isomorphism $\varphi\colon \cX\to\cX'$ induces a bijection between partial parabolics of $\cX$ and $\cX'$ that preserves the property of a partial parabolic to be indecomposable. Let  $e$ be a partial parabolic of $\cX$ and $e'=\varphi(e)$. Then the mapping $s\sbo{\Omega/e}\mapsto \varphi(s)\sbo{\Omega'/e'}$, $s\in S$, defines an algebraic isomorphism $\varphi_{\Omega/e}\colon\cX_{\Omega/e}\to\cX'_{\Omega'/e'}$. 

We say that the classes $\Delta\in\Omega/e$ and $\Delta'\in \Omega'/e'$ are $\varphi$-{\bf associated} if $\varphi$ takes the indecomposable component of $e$ 
containing $\Delta$ as a class to the indecomposable component of $e'$ 
containing $\Delta'$ as a class.
According to \cite[Example~2.3.16]{CP}, in this case $\varphi$ induces 
an algebraic isomorphism 
$\varphi_{\Delta,\Delta'}\colon \cX^{}_{\Delta}\to \cX'^{}_{\Delta'}$ 
such that $\varphi_{\Delta,\Delta'}(s_{\Delta})=\varphi(s)_{\Delta'}$ 
for every $s\in S$.

\subsection{Coherent closure}
There is a natural partial order\, $\le$\, on the set of all coherent configurations 
on the same set~$\Omega$. Namely, given two such coherent configurations $\cX$ and 
$\cX'$, we set $\cX\le\cX'$ if and only if each basis relation of~$\cX$ is the union 
of some basis relations of~$\cX'$. In other words, this means that the partition of 
$\Omega^2$ into basis relations of $\cX'$ is finer than the partition of $\Omega^2$ 
into basis relations of~$\cX$. The minimal and maximal elements with respect to this 
ordering are the {\bf trivial} and {\bf discrete} coherent configurations: the basis 
relations of the former one are the reflexive relation $1_\Omega$ and its complement 
in $\Omega\times\Omega$ (if $|\Omega|\geq 1$), whereas the basis relations of 
the latter one are singletons. 
Note that the trivial and discrete coherent configurations are separable.

\begin{lemma}\label{lemma-ccturnir}
A coherent configuration on~$\Omega$ having a transitive tournament on $\Omega$ 
as a relation is discrete.
\end{lemma}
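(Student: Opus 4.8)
The plan is to exploit the rigidity of a transitive tournament: if $T \subseteq \Omega^2$ is a transitive tournament that happens to be a \emph{relation} (that is, a union of basis relations) of a coherent configuration $\cX = (\Omega, S)$, then the intersection numbers $c^t_{rs}$ force every point to be distinguishable, so all basis relations are singletons. First I would record the combinatorial meaning of $T$ being a transitive tournament on $\Omega$: it is an irreflexive, antisymmetric relation with $T \cup T^* = \Omega^2 \setminus 1_\Omega$ and $T \cdot T \subseteq T$, so $T$ is the strict-order relation of a linear order on $\Omega$. Write $\Omega = \{\omega_1, \ldots, \omega_n\}$ with $(\omega_i, \omega_j) \in T \iff i < j$. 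Because $T$ is a relation of $\cX$, so is its complement $T^* = \Omega^2 \setminus (T \cup 1_\Omega)$, and hence the operation of counting along $T$ respects the basis relations.

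The key step is to show that the position of a point in the linear order is detected by the coherent configuration, and I would do this through the out-degree and in-degree functions. For a point $\alpha \in \Omega$, the set $\alpha T$ is exactly the set of points above $\alpha$ in the order, and $\alpha T^*$ is the set of points below it; thus the number of points above $\alpha$ equals $n - i$ when $\alpha = \omega_i$, a quantity that is strictly monotone in $i$. The plan is to argue that $|\alpha T|$ is constant as $\alpha$ ranges over a fiber: indeed, each fiber $1_\Delta$ is a basis relation, and for $\alpha \in \Delta$ the value $|\alpha T|$ can be written as $\sum_{s \subseteq T} c^{1_\Delta}_{s\, s^*}$-type intersection numbers that depend only on the fiber $1_\Delta$, not on the individual $\alpha$. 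Since $|\alpha T|$ takes $n$ distinct values as $\alpha$ runs over $\Omega$, every fiber must be a singleton, i.e.\ $1_\alpha \in S$ for all $\alpha$.

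Once all diagonal singletons $1_\alpha$ are basis relations, the rest is routine: for any basis relation $s$ with $(\alpha, \beta) \in s$, the fact that $1_\alpha$ and $1_\beta$ are basis relations forces the left and right supports of $s$ to be single points, so $s \subseteq \{(\alpha,\beta)\}$ and hence $s = \{(\alpha,\beta)\}$ is a singleton. Therefore $S$ consists entirely of singletons, which is precisely the definition of the discrete coherent configuration.

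The main obstacle I anticipate is the middle step — making precise that $|\alpha T|$ is a fiber-invariant. The cleanest route is to pick a fiber $\Delta$ and a relation $s \subseteq T$ with nonempty left support inside $\Delta$; by homogeneity the out-valency $|\alpha s|$ is the same for all $\alpha \in \Delta$ (this is a standard consequence of (C3), taking $t = 1_\Delta$), and summing over all basis relations contained in $T$ gives $|\alpha T|$ as a sum of fiber-constant valencies. Combined with the strict monotonicity of $i \mapsto |\omega_i T| = n - i$, this yields that distinct points lie in distinct fibers, and the discreteness conclusion follows. I would take care to treat $T$ as a genuine union of basis relations throughout, since that is the only hypothesis linking the order structure to the coherent configuration.
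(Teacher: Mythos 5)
Your proof is correct and follows essentially the same route as the paper: the paper's own argument also observes that the out-neighborhood sizes $|\alpha T|$ are pairwise distinct in a transitive tournament, while being constant on each fiber, so all fibers are singletons and the configuration is discrete. Your write-up merely makes explicit (via the intersection numbers $c^{1_\Delta}_{s\,s^*}$) the fiber-invariance of valencies that the paper leaves implicit.
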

\prf
Let $s$ be a relation of a coherent configuration on~$\Omega$. Suppose that 
the supports of $s$ equal $\Omega$ and $s$ is a transitive tournament. 
Then there are no two distinct points $\alpha,\beta$ such that $|\alpha s|=|\beta s|$. 
Hence, no two distinct points lie in the same fiber, which means that the coherent configuration is discrete.
\eprf

The {\bf coherent closure} $\WL(T)$ of a set $T$ of binary relations on $\Omega$, is defined 
to be the smallest coherent configuration on $\Omega$ such that each relation of~$T$ is a 
union of some basis relations. 
The coherent closure is canonical with respect to algebraic isomorphisms 
in the sense that if $\varphi,\psi\colon \wl{(T)}\to \wl{(T')}$ are 
algebraic isomorphisms such that $\varphi(t)=\psi(t)$ for all $t\in T$, 
then $\varphi=\psi$.
Furthermore, the coherent closure is a closure operator\footnote{with respect to the natural 
partial order on the partitions of the same set.} on the set of all partitions of $\Omega^2$ 
satisfying conditions (C1) and (C2) in the definition of a coherent configuration. 

For points $\alpha,\beta,\ldots\in\Omega$, we denote by $\cX_{\alpha,\beta,\ldots}$ the coherent closure of the union of $S$ and the set $\{1_{\{\alpha\}},1_{\{\beta\}},\ldots\}$. 

For an equivalence relation $e$ on $\Omega$, we denote by $\cX_e$ the coherent closure 
of the union of $S$ and $\{e\}$. 

For a partition $\pi$ of $\Omega$, we denote by $\cX_\pi$ the coherent closure of the union of $S$ and all of $1_\Delta$, $\Delta\in\pi$. 

The {\bf coherent configuration of a graph} $X$ is defined to be 
the coherent closure of its edge set: $\WL(X)=\WL(\{E(X)\})$.
Note that $\WL(X)=\WL(\overline{X})$.

\begin{lemma}\label{lm:twinparabolic}
The $0$-equivalence and $1$-equivalence of a graph are 
parabolics of its coherent configuration.
\end{lemma}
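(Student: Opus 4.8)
The goal is to show that the $0$-equivalence $e_0$ and the $1$-equivalence $e_1$ of $X=(\Omega,E)$ are parabolics of $\cX=\WL(X)$. Recall that a parabolic is, by definition, an equivalence relation on $\Omega$ that is a union of basis relations. Since the excerpt already records that $e_0$ and $e_1$ are equivalence relations, the plan is to prove only that each of them lies in $S^\cup$, where $S=S(\cX)$.

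First I would record the two relations of $\cX$ on which the argument rests. By definition of the coherent closure, $\WL(X)=\WL(\{E\})$, so $E\in S^\cup$; and $1_\Omega\in S^\cup$ by axiom~(C1). Hence the non-adjacency relation $\Omega^2\setminus E=\overline{E}\cup 1_\Omega$, being the union of the basis relations not contained in $E$, also lies in $S^\cup$, and likewise $\overline{E}\in S^\cup$. Note that both $E$ and $\Omega^2\setminus E$ are symmetric.

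The heart of the proof is to recognise twin-ness as a basis-relation invariant. For a pair $(\alpha,\beta)$ I would consider the size of the symmetric difference of the neighbourhoods of $\alpha$ and $\beta$,
\[
\delta(\alpha,\beta)=\bigl|\{\gamma\in\Omega\colon\ \text{exactly one of }(\alpha,\gamma),(\beta,\gamma)\text{ lies in }E\}\bigr|.
\]
Splitting this count and using that $E$ is symmetric, one obtains
\[
\delta(\alpha,\beta)=\sum_{r\subseteq E,\ s\subseteq\Omega^2\setminus E}c_{rs}^{t}\ +\sum_{r\subseteq\Omega^2\setminus E,\ s\subseteq E}c_{rs}^{t},
\]
where $r,s$ range over basis relations and $t$ is the basis relation containing $(\alpha,\beta)$; by~(C3) the value $\delta(\alpha,\beta)$ therefore depends only on $t$. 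It remains to translate the definitions of twins into conditions on $\delta$ and the adjacency type of $(\alpha,\beta)$: two distinct vertices are $1$-twins precisely when $(\alpha,\beta)\in E$ and $\delta(\alpha,\beta)=2$, the two unavoidable distinguishing vertices being $\alpha$ and $\beta$ themselves, and are $0$-twins precisely when $(\alpha,\beta)\in\overline{E}$ and $\delta(\alpha,\beta)=0$. Because the membership of $(\alpha,\beta)$ in $E$ and the number $\delta(\alpha,\beta)$ are both constant on each basis relation $t$, the set of $1$-twin pairs and the set of $0$-twin pairs are each unions of basis relations.

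Finally I would assemble the answer: $e_0=1_\Omega\cup\{(\alpha,\beta)\in\overline{E}\colon \delta(\alpha,\beta)=0\}$ and $e_1=1_\Omega\cup\{(\alpha,\beta)\in E\colon \delta(\alpha,\beta)=2\}$; since $1_\Omega,E,\overline{E}\in S^\cup$ and $\delta$ is constant on basis relations, both $e_0$ and $e_1$ lie in $S^\cup$, and being equivalence relations they are parabolics. I expect the only delicate point to be the boundary bookkeeping at $\gamma\in\{\alpha,\beta\}$: one must verify that the contributions of $\gamma=\alpha$ and $\gamma=\beta$ to $\delta$ are exactly the forced constants ($2$ in the adjacent case, $0$ in the non-adjacent case) that make $\delta$ encode the neighbourhood condition over $\gamma\notin\{\alpha,\beta\}$ correctly, so that the clean numerical thresholds above really do characterise the twins.
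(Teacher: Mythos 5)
Your proof is correct, but it takes a genuinely different route from the paper: the paper does not prove this lemma at all, it simply cites an external result (Proposition~4.10 of Gavrilyuk--Nedela--Ponomarenko, \emph{The Weisfeiler-Leman dimension of distance-hereditary graphs}), whereas you give a self-contained argument inside the definition of a coherent configuration. Your argument is the standard intersection-number computation and it goes through: since $\cX=\WL(X)$, both $E$ and $\Omega^2\setminus E$ lie in $S(\cX)^\cup$, so each basis relation $t$ is contained in one of them, and your quantity $\delta(\alpha,\beta)$ is indeed constant on $t$ because it expands as the double sum $\sum_{r\subseteq E,\ s\subseteq\Omega^2\setminus E}c_{rs}^t+\sum_{r\subseteq\Omega^2\setminus E,\ s\subseteq E}c_{rs}^t$ (symmetry of $E$ is what lets you replace $(\beta,\gamma)\notin E$ by $(\gamma,\beta)\in\Omega^2\setminus E$, so it is worth saying this explicitly). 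The boundary bookkeeping you flag as the delicate point is exactly right and does work out: for a distinct adjacent pair, $\gamma=\alpha$ and $\gamma=\beta$ each contribute $1$ to $\delta$ by irreflexivity and symmetry of $E$, so $1$-twins among edges are precisely the pairs with $\delta=2$; for a distinct non-adjacent pair these two choices of $\gamma$ contribute $0$, so $0$-twins are precisely the pairs in $\overline{E}$ with $\delta=0$. Together with (C1) for $1_\Omega$ this exhibits both equivalences as unions of basis relations, i.e., as parabolics. What your approach buys is independence from the cited reference (and it would let the paper drop that dependency); what the paper's citation buys is brevity and a pointer to a more general statement proved once and reused across several papers.
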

\prf Follows from \cite[Proposition 4.10]{GNP}.\eprf

\subsection{Direct sum and tensor product.}
Let $\cX=(\Omega,S)$ and $\cX'=(\Omega',S')$ be two coherent configurations. Denote by $\Omega\sqcup\Omega'$ the disjoint union of~$\Omega$ and~$\Omega'$, and by $S\boxplus S'$ the union of the set $S\sqcup S'$ and the set of all relations $\Delta\times\Delta'$ and $\Delta'\times\Delta$ with $\Delta\in F$ and $\Delta'\in F'$. Then the pair
$$
\cX\boxplus\cX'=(\Omega\sqcup\Omega',S\boxplus S')
$$
is a coherent configuration called the {\bf direct sum} of~$\cX$ and~$\cX'$. One can see that $\cX\boxplus\cX'$ is the smallest coherent configuration $\cY$ on~$\Omega\sqcup\Omega'$ such that 
$\cX=\cY_{\Omega}$ and $\cX'=\cY_{\Omega'}$.
It should be noted that $\cX\boxplus\cX'$ is separable if and only if so are $\cX$ and~$\cX'$, 
see \cite[Corollary~3.2.8]{CP}. 

\begin{lemma}\label{lm:directsum}
Let $X$ be a composition graph and $\pi$ be the partition of the vertex set of $X$ 
into the modules of this composition. 
Then $\wl{(X)}_\pi=\boxplus_{\Delta\in\pi}\wl{(X_{\Delta})}$.
\end{lemma}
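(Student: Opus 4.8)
\textbf{The plan is to} show that the two coherent configurations $\wl{(X)}_\pi$ and $\boxplus_{\Delta\in\pi}\wl{(X_\Delta)}$ coincide by proving each is $\le$ the other. The cleaner direction, which I would set up first, is that $\wl{(X)}_\pi\le \boxplus_{\Delta\in\pi}\wl{(X_\Delta)}$. To see this, write $\cY:=\boxplus_{\Delta\in\pi}\wl{(X_\Delta)}$, a coherent configuration on the disjoint union $\bigsqcup_{\Delta\in\pi}\Delta=\Omega$. Since each $X_\Delta=X_\Delta$ is an induced subgraph, its edge set $E_\Delta$ is a union of basis relations of $\wl{(X_\Delta)}$, hence of $\cY$; and by Lemma~\ref{lm:eE} the remaining edges $E-e$ (where $e$ is the equivalence relation whose classes are the modules) are unions of full blocks $\Delta\times\Gamma$, which by the definition of the direct sum are themselves unions of basis relations $\Delta'\times\Gamma'$ of $\cY$ with $\Delta',\Gamma'$ fibers. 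Thus $E(X)=E_{\text{inside}}\cup (E-e)$ is a relation of $\cY$, and each $1_\Delta$ is a relation of $\cY$ as well; since $\wl{(X)}_\pi$ is by definition the \emph{smallest} coherent configuration containing $E(X)$ and all the $1_\Delta$, we get $\wl{(X)}_\pi\le\cY$.

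\textbf{The reverse inclusion} $\cY\le\wl{(X)}_\pi$ is the substantive half. Here I would use the universal property of the direct sum recalled just above the statement: $\cY=\boxplus_{\Delta\in\pi}\wl{(X_\Delta)}$ is the smallest coherent configuration $\cZ$ on $\Omega$ with $\cZ_\Delta=\wl{(X_\Delta)}$ for every $\Delta\in\pi$. So it suffices to verify two things about $\cW:=\wl{(X)}_\pi$: first, that each module $\Delta$ is a homogeneity set of $\cW$ (so that the restriction $\cW_\Delta$ makes sense as a coherent configuration on $\Delta$), and second, that $\cW_\Delta=\wl{(X_\Delta)}$. The first point follows because $1_\Delta$ was adjoined when forming $\cX_\pi=\wl{(X)}_\pi$, so $1_\Delta$ is a relation of $\cW$ and $\Delta$ is a homogeneity set. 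For the second, the containment $\wl{(X_\Delta)}\le \cW_\Delta$ holds because $E(X_\Delta)=E(X)_\Delta$ is a relation of $\cW_\Delta$ (restriction of a relation of $\cW$) and $\cW_\Delta$ is coherent, so it must refine $\wl{(X_\Delta)}$. The opposite containment $\cW_\Delta\le\wl{(X_\Delta)}$ is what needs genuine argument.

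\textbf{For that last containment} the natural route is to exhibit a coherent configuration $\cZ$ that visibly lies below $\wl{(X)}_\pi$ and whose restriction to each $\Delta$ is no finer than $\wl{(X_\Delta)}$, and then appeal to minimality of the coherent closure. Concretely, I would build $\cZ:=\boxplus_{\Delta\in\pi}\wl{(X_\Delta)}$ itself and check directly that it is a coherent configuration containing $E(X)$ and all $1_\Delta$ as relations --- which is exactly the computation from the first paragraph --- so that $\wl{(X)}_\pi\le\cZ=\cY$. Combined with $\cY\le\wl{(X)}_\pi$ from the universal property, this pins down equality. In effect the two halves are dual: one uses that $\cY$ is a coherent configuration containing the generators of $\wl{(X)}_\pi$, the other that $\wl{(X)}_\pi$ restricts correctly on each block and hence dominates the direct sum. \textbf{The main obstacle} I anticipate is the bookkeeping around the mixed ``between-module'' relations: I must make sure that in $\cW=\wl{(X)}_\pi$ the coherent-closure process does not refine a block $\Delta\times\Gamma$ into something strictly finer than the product fibers --- i.e.\ that for distinct modules the relations of $\cW$ meeting $\Delta\times\Gamma$ are exactly the products $s_\Delta\times t_\Gamma$ of fibers, with no extra splitting coming from the graph structure between the blocks. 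This is precisely where the composition/partitive structure (Lemma~\ref{lm:eE}) is essential: because every vertex outside a module sees all or none of it, the intersection numbers $c_{rs}^t$ for cross-block relations are constant once the within-block configurations are fixed, so no further refinement occurs and $\cW_\Delta=\wl{(X_\Delta)}$ exactly.
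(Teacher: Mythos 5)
Your proof is correct, but it follows a genuinely different route from the paper's. The paper argues by modifying the graph: since all edges between distinct modules form full blocks $\Delta\times\Gamma$, it deletes them to obtain a graph $X'$ whose connected components lie inside the modules, observes that the deleted edge set $E_0$ is a relation of both $\wl{(X)}_\pi$ and $\wl{(X')}_\pi$ (so these two closures coincide), and then invokes the known fact \cite[Exercise~3.7.35]{CP} that the coherent closure of a disjoint union of graphs, with the partition individualized, is the direct sum of the closures of the parts. You instead prove the two inequalities directly: $\wl{(X)}_\pi\le\boxplus_{\Delta\in\pi}\wl{(X_\Delta)}$ by checking that the direct sum contains $E(X)$ and all $1_\Delta$ as relations (Lemma~\ref{lm:eE} enters here exactly as it does in the paper, to see that cross blocks are full products and hence unions of fiber products), and then $\boxplus_{\Delta\in\pi}\wl{(X_\Delta)}\le\wl{(X)}_\pi$ via the minimality (universal) property of the direct sum stated in Section~\ref{sect:cc&extensions}, once you know $\left(\wl{(X)}_\pi\right)_\Delta=\wl{(X_\Delta)}$ --- which indeed follows by restricting the first inequality to $\Delta$ and combining with the easy containment $\wl{(X_\Delta)}\le\left(\wl{(X)}_\pi\right)_\Delta$. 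What each approach buys: yours is self-contained modulo facts explicitly stated in the paper, while the paper's is shorter because it outsources the core step to the cited exercise. Two points of criticism, neither fatal: your third paragraph merely re-derives the inequality of your first paragraph, so the proof should be compressed to avoid the appearance of circularity; and your closing worry about cross-block refinement is already fully settled by $\wl{(X)}_\pi\le\boxplus_{\Delta\in\pi}\wl{(X_\Delta)}$, so the appeal to ``constant intersection numbers'' is unnecessary (and, as phrased, would not constitute a rigorous argument on its own).
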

\begin{proof}
Let $X=(\Omega,E)$. Then, given distinct $\Delta,\Gamma\in \pi$, 
we have 
$$
E_{\Delta,\Gamma}=\varnothing\ \text{ or }\ \Delta\times \Gamma\subseteq E.
$$
Let $X'=(\Omega,E')$ be the graph obtained from $X$ by removing 
the set $E_0$ of all edges $E_{\Delta,\Gamma}$, $\Delta\ne\Gamma$. 
Then the vertex set of every connected component of $X'$ is contained 
in some $\Delta\in \pi$. Moreover, 
$E'=E\setminus E_0$ and $E=E'\cup E_0$. Since $E_0$ is a relation 
of both $\wl{(X')}_{\pi}$ and $\wl{(X)}_{\pi}$, we obtain 
\[
\wl{(X')}_{\pi}=\wl{(X)}_{\pi}.
\]
It is easily seen that each graph $X_\Delta$ is a union of 
connected components of the graph $X'$, so that the lemma follows 
from \cite[Exercise~3.7.35]{CP}.
\end{proof}

Given coherent configurations $\cX_1=(\Omega_1,S_1)$ and $\cX_2=(\Omega_2,S_2)$ denote by $S_1\otimes S_2$
the set of all relations 
$$
s_1\otimes s_2=\left\{((\alpha_1,\alpha_2),(\beta_1,\beta_2))\in (\Omega_1\times\Omega_2)^2:\ (\alpha_1,\beta_1)\in s_1,\ (\alpha_2,\beta_2)\in s_2\right\},
$$
where $s_1\in S_1$ and  $s_2\in S_2$. Then the pair
$\cX_1\otimes\cX_2=(\Omega_1\times \Omega_2,S_1\otimes S_2)$ is a coherent configuration. It is called the
{\bf tensor product} of~$\cX_1$ and~$\cX_2$. For each positive integer~$m$, the $m$-tensor power of $\cX$ is denoted by $\cX^m$.


\subsection{Extensions of coherent configurations and their algebraic isomorphisms}\label{011122a}

Let $\cX$ be a coherent configuration on $\Omega$ and $m$ a positive integer. The {\bf $m$-extension} of $\cX$ is
by definition the following coherent configuration on $\Omega^m$:
\[
\wh\cX^{(m)}=\WL\left(S\left(\cX^m\right)\cup \{1_{\diag(\Omega^m)}\}\right)
\]
where $\cX^m$ is the $m$-fold tensor power of $\cX$. 
We observe that except for trivial cases the $m$-extension is a non-homogeneous coherent configuration for all $m\ge 2$. From the definition it follows that $\wh\cX^{(1)}=\cX$ and
$\wh\cX^{(m)}\geq \cX^{m}$.

Let $\varphi\colon\cX\to\cX'$ be an algebraic isomorphism. 
Then $\varphi$ induces the component-wise algebraic isomorphism $\varphi^m:\cX^m\to{\cX'}^m$. 
An algebraic isomorphism $\psi\colon\wh\cX^{(m)}\to\wh{\cX}'^{(m)}$ is called an {\bf $m$-extension} if
\nmrt
\tm{1} $(\diag(\Omega^m))^\psi=\diag((\Omega')^m)$,
\tm{2} $s^\psi=s^{\varphi^m}$ for all $s\in S(\cX^m)$.
\enmrt
Each algebraic isomorphism obviously has a 1-extension, which always 
coincides with it. Furthermore, for any~$m$ the existence of the $m$-extension 
of $\varphi$ implies its uniqueness (this follows from the canonicity 
of the coherent closure with respect to algebraic isomorphisms); 
we denote it by $\wh\varphi^{(m)}$. 
We note that not every algebraic isomorphism has an $m$-extension. 

An algebraic isomorphism is called an {\bf $m$-isomorphism} if it has the $m$-extension. 
Every algebraic isomorphism induced by some isomorphism is 
an $m$-isomorphism for all~$m$. 
Note that every $m$-isomorphism is a $k$-isomorphism for all $k\le m$.

The coherent configuration 
\[
\ov\cX=\ov\cX^{(m)}=((\wh\cX^{(m)})_{\diag(\Omega^m)})^{\partial_m^{-1}},
\]
where $\partial_m\colon\alpha\mapsto (\alpha,\ldots,\alpha)$ is the diagonal mapping from $\Omega$ to~$\Omega^m$, is called the {\bf $m$-closure} of~$\cX$; 
if $\cX={\ov\cX}^{(m)}$, then $\cX$ is
said to be {\bf $m$-closed} (in particular, any coherent configuration is $1$-closed). 
Any $m$-isomorphism~$\varphi$ uniquely extends to the algebraic isomorphism $\ov\varphi^{(m)}$ between the corresponding $m$-closures. 

A coherent configuration $\cX$ is said to be {\bf $m$-separable} if, for all $\cX'$, 
every $m$-isomorphism from $\cX$ to $\cX'$ is induced by an isomorphism. 
The integer
$$
s(\cX)=\min\{m:\ \cX\ \text{is $m$-separable}\}
$$
is called the {\bf separability number} of~$\cX$.
Obviously, $s(\cX)=1$ if and only if a coherent configuration $\cX$ is separable.


We will frequently use throughout the paper the following lemma (see \cite[Lemma~6.2]{EP2000}). 
Below, given $s\subseteq \Omega^2$ and $i,j\in\{1,\ldots,m\}$, 
we set
$$
\cyl_s(i,j)=\{(x,y)\in \Omega^{m}\times \Omega^{m}\colon\ (x_i,y_j)\in s\}.
$$

\begin{lemma}\label{lm:cyl}
Let $\cX$ be a coherent configuration on $\Omega$ and $m$ a positive integer. 
For all $s\in S(\cX)^\cup$ and $i,j\in\{1,\ldots,m\}$,  
one has:
\nmrt
\tm{i} $\cyl_s(i,j)$ is a relation of the coherent configuration $\wh\cX^{(m)}$;
\tm{ii} $\cyl_s(i,j)^{\wh\varphi}=\cyl_{s^{\ov\varphi}}(i,j)$ for any $m$-isomorphism $\varphi$,
where $\wh\varphi=\wh\varphi^{(m)}$ and $\ov\varphi=\ov\varphi^{(m)}$.
\enmrt
\end{lemma}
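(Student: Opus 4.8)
The plan is to express each cylinder $\cyl_s(i,j)$ as the result of applying the standard operations available in any coherent configuration---union, intersection, transposition $r\mapsto r^*$, and relational composition $r\mapsto r\cdot t$---to relations that are visibly present in $\wh\cX^{(m)}$, and then, for part~(ii), to push these operations through the algebraic isomorphism $\wh\varphi$. I will use without comment that the relations $S(\cY)^\cup$ of a coherent configuration $\cY$ are closed under these four operations, and that an algebraic isomorphism commutes with all of them (for $\cdot$ and $\cap$ this rests on the preservation of the structure constants of~(C3), and extends additively to $S(\cY)^\cup$).

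First I would dispose of the ``diagonal'' cylinders. For $s\in S(\cX)^\cup$ and any $i$, the relation $\cyl_s(i,i)$ is precisely the union of those basis relations $s_1\otimes\cdots\otimes s_m$ of the tensor power $\cX^m$ that carry $s$ in the $i$-th factor; hence it is a relation of $\cX^m\le\wh\cX^{(m)}$. Since $1_{\diag(\Omega^m)}$ is a basis relation of $\wh\cX^{(m)}$, the set $D:=\diag(\Omega^m)$ is a homogeneity set, so the box $\Omega^m\times D$ is a relation of $\wh\cX^{(m)}$; intersecting, the relation
\[
q_i^s:=\cyl_s(i,i)\cap(\Omega^m\times D)=\{(x,d)\colon d\in D,\ (x_i,d_1)\in s\}
\]
is a relation of $\wh\cX^{(m)}$ (note that $d_1=\cdots=d_m$ for $d\in D$).

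The crux---and the only place where $i\ne j$ creates a genuine difficulty, since the tensor power links its two arguments only coordinatewise---is to route the $j$-th coordinate through the diagonal. Setting $q_j^{=}:=q_j^{\,1_\Omega}=\{(y,d)\colon d\in D,\ y_j=d_1\}$, I would verify the identity
\[
q_i^s\cdot(q_j^{=})^*=\{(x,y)\colon\exists\,d\in D,\ (x_i,d_1)\in s,\ y_j=d_1\}=\{(x,y)\colon (x_i,y_j)\in s\}=\cyl_s(i,j),
\]
the witness being the unique diagonal point with constant value $y_j$. As a composite of relations of $\wh\cX^{(m)}$, the left-hand side is a relation of $\wh\cX^{(m)}$, which proves~(i).

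For~(ii) I would apply $\wh\varphi=\wh\varphi^{(m)}$ to the identity $\cyl_s(i,j)=q_i^s\cdot(q_j^{=})^*$ and track each building block. By the defining property~(2) of the $m$-extension together with the factorwise action of $\varphi^m$, one gets $\cyl_s(i,i)^{\wh\varphi}=\cyl_s(i,i)^{\varphi^m}=\cyl_{s^\varphi}(i,i)$; by property~(1), $(\Omega^m\times D)^{\wh\varphi}=(\Omega')^m\times D'$ with $D'=\diag((\Omega')^m)$; and since $(1_\Omega)^\varphi=1_{\Omega'}$, the blocks $q_i^s$ and $q_j^{=}$ map to their primed analogues in $\wh{\cX}'^{(m)}$. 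As $\wh\varphi$ commutes with $\cap$, ${}^*$ and $\cdot$, this yields $\cyl_s(i,j)^{\wh\varphi}=\cyl_{s^\varphi}(i,j)$. Finally, because $\ov\varphi=\ov\varphi^{(m)}$ extends $\varphi$ and $s\in S(\cX)^\cup$, we have $s^\varphi=s^{\ov\varphi}$, giving the claimed equality $\cyl_s(i,j)^{\wh\varphi}=\cyl_{s^{\ov\varphi}}(i,j)$. I expect the diagonal-routing identity displayed above to be the one substantive step; everything else is bookkeeping with the closure operations and with properties~(1)--(2).
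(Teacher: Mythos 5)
Your proof is correct. Note that the paper offers no argument of its own for Lemma~\ref{lm:cyl}: it is imported by reference from \cite[Lemma~6.2]{EP2000}, so your write-up functions as a self-contained substitute, and it follows what is essentially the standard route---$\cyl_s(i,i)$ is a union of basis relations of the tensor power $\cX^m\le\wh\cX^{(m)}$; the generator $1_{\diag(\Omega^m)}$ makes $D=\diag(\Omega^m)$ a homogeneity set; and the composition identity $\cyl_s(i,j)=q_i^s\cdot(q_j^{=})^*$ splices coordinates $i$ and $j$ together through the diagonal, using closure of $S(\wh\cX^{(m)})^\cup$ under $\cap$, ${}^*$ and $\cdot$. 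Part (ii) is also handled correctly: algebraic isomorphisms commute with these operations, property (2) of the $m$-extension gives $\cyl_s(i,i)^{\wh\varphi}=\cyl_{s^{\varphi}}(i,i)$, property (1) gives $D^{\wh\varphi}=D'$, and $s^{\varphi}=s^{\ov\varphi}$ for $s\in S(\cX)^\cup$ because $\ov\varphi^{(m)}$ extends $\varphi$. One cosmetic slip: $1_{\diag(\Omega^m)}$ is a relation of $\wh\cX^{(m)}$ (a union of basis relations), not necessarily a basis relation, since $\diag(\Omega^m)$ may split into several fibers; your argument only needs $D$ to be a homogeneity set, which is exactly what being a union of reflexive basis relations yields.
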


\section{Composition graphs and their coherent configurations}\label{011122c}

In this section we study the coherent configurations of composition graphs 
and their algebraic isomorphisms. Throughout the section, let $X=(\Omega,E)$ 
be a composition graph with respect to an equivalence relation $e$ on $\Omega$, 
$X_0$ the quotient graph of $X$ modulo $e$, and $\pi=\Omega/e$.  

\begin{lemma}\label{lm:wlY}
In the above notation, one has  
\[
\wl{(X)}_{\pi}=\boxplus_{\Delta\in \pi}\wl{(X_{\Delta})}\quad\mbox{and}\quad 
\wl{(X)}_{\pi}\geq \wl{(X)}_e.
\]
In particular, the restriction of $\wl{(X)}_e$ to any $\Delta\in \pi$ equals $\wl{(X_{\Delta})}$.
\end{lemma}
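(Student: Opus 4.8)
The plan is to prove the two displayed assertions and then derive the final ``in particular'' clause from the first equality together with the properties of restriction.

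First I would establish the equality $\wl{(X)}_{\pi}=\boxplus_{\Delta\in\pi}\wl{(X_\Delta)}$. This is exactly the content of Lemma~\ref{lm:directsum}, whose hypotheses are met here: $X$ is a composition graph and $\pi$ is the partition of $\Omega$ into the modules of this composition. So the first equality is immediate by citing Lemma~\ref{lm:directsum}.

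Next I would prove the inequality $\wl{(X)}_{\pi}\geq \wl{(X)}_e$. Recall that $\wl{(X)}_e$ is the coherent closure of $S(\wl{(X)})\cup\{e\}$, while $\wl{(X)}_\pi$ is the coherent closure of $S(\wl{(X)})\cup\{1_\Delta:\Delta\in\pi\}$. Since the coherent closure is monotone with respect to the partial order $\le$, it suffices to observe that $e$ is a relation of $\wl{(X)}_\pi$, i.e., $e\in S(\wl{(X)}_\pi)^\cup$. But $e=\bigcup_{\Delta\in\pi}(\Delta\times\Delta)$, and for the direct sum $\boxplus_{\Delta\in\pi}\wl{(X_\Delta)}$ each block $\Delta\times\Delta$ is a union of basis relations (indeed each $\wl{(X_\Delta)}$ is a coherent configuration on $\Delta$, so $\Delta\times\Delta=\Omega_\Delta^2$ is a union of its basis relations, all of which are basis relations of the direct sum). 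Hence $e$ is a relation of $\wl{(X)}_\pi=\boxplus_{\Delta\in\pi}\wl{(X_\Delta)}$, and since $\wl{(X)}_\pi$ already refines $\wl{(X)}$, the closure operator applied to $S(\wl{(X)})\cup\{e\}$ cannot exceed $\wl{(X)}_\pi$; that is, $\wl{(X)}_e\le\wl{(X)}_\pi$.

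Finally, for the ``in particular'' clause I would combine the two statements. Fix $\Delta\in\pi$. Since $\wl{(X)}_e\le\wl{(X)}_\pi$ and $\Delta\in\pi$ is a homogeneity set of $\wl{(X)}_\pi$ (its indicator $1_\Delta$ is a relation there), $\Delta$ is also a homogeneity set of $\wl{(X)}_e$, so the restriction $(\wl{(X)}_e)_\Delta$ makes sense. Taking restrictions to $\Delta$ is itself monotone, so from $\wl{(X_\Delta)}\le(\wl{(X)}_e)_\Delta\le(\wl{(X)}_\pi)_\Delta=\wl{(X_\Delta)}$ one gets equality throughout; the rightmost equality is the restriction of a direct sum to one of its summand supports, which returns that summand. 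The one direction needing a word of care is $\wl{(X_\Delta)}\le(\wl{(X)}_e)_\Delta$: this holds because $E_\Delta=E\cap(\Delta\times\Delta)$ is a relation of $(\wl{(X)}_e)_\Delta$ (as $E$ is a relation of $\wl{(X)}\le\wl{(X)}_e$ and $\Delta$ is a homogeneity set), so the coherent closure of $E_\Delta$ on $\Delta$, which is $\wl{(X_\Delta)}$, is dominated by $(\wl{(X)}_e)_\Delta$. The main obstacle I anticipate is purely bookkeeping: making sure that ``restriction'' and ``closure'' interact monotonically and that $\Delta$ is genuinely a homogeneity set of $\wl{(X)}_e$ rather than only of the finer $\wl{(X)}_\pi$; once the inequality $\wl{(X)}_e\le\wl{(X)}_\pi$ is in hand this is routine.
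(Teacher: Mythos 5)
Your argument is essentially the paper's proof: the first equality is quoted from Lemma~\ref{lm:directsum}; the inequality $\wl{(X)}_{\pi}\geq \wl{(X)}_e$ follows because $e=\bigcup_{\Delta\in\pi}\Delta\times\Delta$ is a relation (indeed a parabolic) of $\wl{(X)}_{\pi}$ and the coherent closure is minimal; and the ``in particular'' clause comes from the same sandwich
$\wl{(X_{\Delta})}\le \left(\wl{(X)}_e\right)_{\Delta}\le \left(\wl{(X)}_{\pi}\right)_{\Delta}=\wl{(X_{\Delta})}$
that the paper writes.

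One side-justification is wrong, although the fact you need is still true. You claim that since $\wl{(X)}_e\le\wl{(X)}_{\pi}$ and $\Delta$ is a homogeneity set of $\wl{(X)}_{\pi}$, the set $\Delta$ is also a homogeneity set of $\wl{(X)}_e$. The implication runs the wrong way: relations of the \emph{coarser} configuration $\wl{(X)}_e$ are relations of the \emph{finer} one $\wl{(X)}_{\pi}$, not conversely, so $1_\Delta$ being a relation of $\wl{(X)}_{\pi}$ tells you nothing about $\wl{(X)}_e$. Moreover, the claim itself is false in general: for $X=C_4$, viewed as the composition graph $K_2[\overline{K_2},\overline{K_2}]$ with $e$ the antipodal equivalence, $e$ is already a parabolic of $\wl{(X)}$, so $\wl{(X)}_e=\wl{(X)}$ is homogeneous and the two classes of $e$ are not homogeneity sets. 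What makes your sandwich legitimate is that in this paper restrictions are defined on classes of (partial) parabolics, not only on homogeneity sets (Section~\ref{ssect:parabolics}): $e$ is a parabolic of $\wl{(X)}_e$ by the very construction of $\wl{(X)}_e$ as the coherent closure of $S(\wl{(X)})\cup\{e\}$, and $\Delta\in\Omega/e$, so $\left(\wl{(X)}_e\right)_{\Delta}$ is well defined. With that substitution, the rest of your argument --- monotonicity of restriction, and $E_\Delta$ being a relation of $\left(\wl{(X)}_e\right)_{\Delta}$ so that $\wl{(X_\Delta)}\le\left(\wl{(X)}_e\right)_{\Delta}$ --- goes through and coincides with the paper's proof.
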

\begin{proof}
The first formula follows from Lemma \ref{lm:directsum}. 
Further, as $e$ is a parabolic of $\wl{(X)}_{\pi}$ and 
$\wl{(X)}_{\pi}\geq \wl{(X)}$, one has $\wl{(X)}_{\pi}\geq \wl{(X)}_e$ 
and hence:
\[
\wl{(X_{\Delta})}=\left(\wl{(X)}_{\pi}\right)_{\Delta}\geq \left(\wl{(X)}_e\right)_{\Delta}\geq \wl{(X_{\Delta})},\] 
whence the lemma follows.
\end{proof}

The next theorem involves the notation and definitions 
from Section \ref{ssect:parabolics}.

\begin{theorem}\label{theo:C}
In the above notation, 
let $\cX'$ be a coherent configuration on $\Omega'$, 
$\varphi\colon \wl{(X)_e}\to \cX'$ be an algebraic isomorphism, 
$e'=\varphi(e)$, $\pi'=\Omega'/e'$, and $E'=\varphi(E)$. 
Then the graph $X'=(\Omega',E')$ 
is a composition graph with respect to $e'$. Moreover, one has:
\begin{itemize}
    \item[(i)] $\cX'=\wl{(X')_{e'}}$;
    \item[(ii)] $\varphi_{\Delta,\Delta'}\left(\wl{(X_{\Delta})}\right)=\wl{(X'_{\Delta'})}$ for all $\varphi$-associated $\Delta\in \pi$ and $\Delta'\in \pi'$;
    \item[(iii)] $\varphi$ induces an algebraic isomorphism $\varphi_0\colon\wl(X_0)_{\Pi}\to\wl(X'_0)_{\Pi'}$, where $\Pi=\Pi(e)$ 
    and $\Pi'=\Pi(e')$.  
  \end{itemize}
\end{theorem}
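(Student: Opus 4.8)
The plan is to transport the structural data defining a composition graph, together with the coherent-closure structure, across the algebraic isomorphism $\varphi$. Throughout I would use that an algebraic isomorphism $\varphi\colon\cX\to\cX'$ respects all Boolean operations on relations (it is a bijection $S\to S'$, extended to $S^\cup\to {S'}^\cup$), respects transposition and the diagonal, and, crucially, respects relational composition: since for basis relations $r\cdot s=\bigcup\{t\colon c_{rs}^t\ne 0\}$ and $\varphi$ preserves the numbers $c_{rs}^t$, one has $\varphi(r\cdot s)=\varphi(r)\cdot\varphi(s)$ for all relations $r,s$ of $\cX$. I would first prove that $X'$ is a composition graph with respect to $e'$. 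Since $E$ and $e$ are relations of $\cX=\wl(X)_e$, so is $E-e$, and $\varphi(E-e)=\varphi(E)-\varphi(e)=E'-e'$. Applying $\varphi$ to the two composition identities $e\cdot(E-e)=E-e=(E-e)\cdot e$ of Lemma~\ref{lm:eE} gives $e'\cdot(E'-e')=E'-e'=(E'-e')\cdot e'$; as $e'=\varphi(e)$ is again a parabolic, hence an equivalence relation, Lemma~\ref{lm:eE} shows that $X'=(\Omega',E')$ is a composition graph with respect to $e'$.

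The engine for (i) and (iii) is the compatibility of the operator $\wl$ with algebraic isomorphisms: if $T\subseteq S(\cX)^\cup$, then $\varphi$ restricts to an algebraic isomorphism $\wl(T)\to\wl(\varphi(T))$ agreeing with $\varphi$ on $T$; in particular, if $\cX=\wl(T)$, then $\cX'=\wl(\varphi(T))$. I would justify this by running the WL refinement from $T$ and from $\varphi(T)$ in parallel: the initial partitions correspond under $\varphi$ (it respects unions, transposition and the diagonal), and each refinement step is $\varphi$-equivariant because $\varphi$ preserves the structure constants, so the stabilized partitions correspond. For (i) I apply this with $T=\{E,e\}$, using that $\wl(X)_e=\wl(\{E,e\})$ and $\varphi(\{E,e\})=\{E',e'\}$, to conclude $\cX'=\wl(\{E',e'\})=\wl(X')_{e'}$.

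For (ii), Lemma~\ref{lm:wlY} gives that the restriction of $\cX=\wl(X)_e$ to $\Delta\in\pi$ equals $\wl(X_\Delta)$, and by (i) together with Lemma~\ref{lm:wlY} applied to the composition graph $X'$, the restriction of $\cX'$ to $\Delta'\in\pi'$ equals $\wl(X'_{\Delta'})$. Hence the algebraic isomorphism $\varphi_{\Delta,\Delta'}$ attached to $\varphi$-associated classes in Section~\ref{ssect:parabolics} is precisely an algebraic isomorphism $\wl(X_\Delta)\to\wl(X'_{\Delta'})$, which is the assertion. For (iii) I would start from the induced quotient isomorphism $\varphi_{\Omega/e}\colon\cX_{\Omega/e}\to\cX'_{\Omega'/e'}$. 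The edge set $E_0$ of $X_0$ equals $(E-e)_{\Omega/e}$, a relation of $\cX_{\Omega/e}$, with $\varphi_{\Omega/e}(E_0)=(E'-e')_{\Omega'/e'}=E'_0$; moreover each $1_\Delta$, $\Delta\in\Pi=\Pi(e)$, is a relation of $\cX_{\Omega/e}$, and since $\varphi$ carries the indecomposable components of the parabolic $e$ to those of $e'$, the map $\varphi_{\Omega/e}$ sends $\{1_\Delta\colon\Delta\in\Pi\}$ to $\{1_{\Delta'}\colon\Delta'\in\Pi'\}$. Taking $T=\{E_0\}\cup\{1_\Delta\colon\Delta\in\Pi\}$, so that $\wl(T)=\wl(X_0)_\Pi$, and applying the restriction property of the previous paragraph to $\varphi_{\Omega/e}$ yields the desired $\varphi_0\colon\wl(X_0)_\Pi\to\wl(X'_0)_{\Pi'}$.

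I expect the main obstacle to be the compatibility lemma of the second paragraph: making precise that algebraic isomorphisms commute with the coherent-closure operator and therefore restrict to sub-closures generated by a chosen family $T$ of relations. Once this is available (either via the parallel-refinement argument sketched above or by citing the corresponding fact from \cite{CP}), the rest is bookkeeping with Lemma~\ref{lm:eE}, Lemma~\ref{lm:wlY}, and the quotient/restriction machinery of Section~\ref{ssect:parabolics}. A secondary subtlety is the appearance of $\Pi$ in (iii): I do not need the equality $\cX_{\Omega/e}=\wl(X_0)_\Pi$, only the inclusion $\wl(X_0)_\Pi\le\cX_{\Omega/e}$ together with the correct correspondence $\Pi\mapsto\Pi'$, the latter resting on the preservation of indecomposable components of $e$ under $\varphi$.
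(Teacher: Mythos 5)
Your proposal is correct and takes essentially the same route as the paper: the composition-graph claim by transporting the identities of Lemma~\ref{lm:eE} along $\varphi$, part (ii) from the last statement of Lemma~\ref{lm:wlY} applied to both $X$ and $X'$, and part (iii) via the quotient isomorphism $\varphi_{\Omega/e}$ together with the fact that $\varphi$ preserves indecomposable components of $e$, hence takes $\Pi$ to $\Pi'$. The only difference is one of explicitness: the compatibility of the coherent closure with algebraic isomorphisms (that $\cX=\wl(T)$ forces $\cX'=\wl(\varphi(T))$), which you isolate and justify by the parallel-refinement argument, is exactly what the paper uses tacitly when it asserts that (i) ``follows from the definition of $X'$'' and that $\varphi_0(\wl(X_0))=\wl(X'_0)$.
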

\begin{proof} 
Recall that $E$ is a relation of the coherent configuration $\cX=\wl{(X)}_{e}$ 
and, by Lemma \ref{lm:eE}, one has $e\cdot (E-e)=(E-e)\cdot e=E-e$. 
Since algebraic isomorphism respect the set-theoretical operations with 
relations, we obtain $e'\cdot (E'-e')=(E'-e')\cdot e'=E'-e'$. 
Hence, by Lemma \ref{lm:eE}, $X'$ is a composition graph with respect to $e'$. 
Then (i) follows from the definition of $X'$, whereas (ii) follows from 
the last statement of Lemma \ref{lm:wlY}. 
Finally, the algebraic isomorphism $\varphi$ induces an algebraic isomorphism $\varphi_0\colon\cX_{\Omega/e}\to\cX'_{\Omega'/e'}$ which takes the edge set $E_{\Omega/e}$ of $X_0$ to the edge set $E'_{\Omega'/e'}$ of $X_0'$. 
Therefore, $\varphi_0(\WL(X_0))=\WL(X'_0)$. It remains 
to observe that $\varphi$ and hence $\varphi_0$ takes the classes of $\Pi$ 
to those of $\Pi'$.
\end{proof}

\begin{theorem}\label{theo:iso}
In the notation of Theorem \ref{theo:C}, the following holds:
\begin{itemize}
    \item[(i)] if $\varphi_0$ and each of $\varphi_{\Delta,\Delta'}$ are induced 
    by isomorphisms, then so is $\varphi$;
    \item[(ii)] if $\varphi$ is an $m$-isomorphism for some natural $m$, then so are $\varphi_0$ and each of $\varphi_{\Delta,\Delta'}$. 
\end{itemize}
\end{theorem}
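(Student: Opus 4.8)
The plan is to treat the two parts by opposite techniques: for (i) I would glue the local isomorphisms of the pieces into a single global isomorphism and then invoke the canonicity of the coherent closure, while for (ii) I would push the $m$-extension $\wh\varphi^{(m)}$ of $\varphi$ down to the quotient and to the restrictions, relying on the functoriality of $m$-extensions encapsulated in Lemma~\ref{lm:cyl}. For part (i), fix an isomorphism $f_0$ inducing $\varphi_0$ and, for each $\varphi$-associated pair $(\Delta,\Delta')$, an isomorphism $f_{\Delta,\Delta'}\colon\Delta\to\Delta'$ inducing $\varphi_{\Delta,\Delta'}$. Since the classes of $\Pi$ and $\Pi'$ are precisely the fibers of $\wl(X_0)_\Pi$ and $\wl(X_0')_{\Pi'}$, the bijection $f_0\colon\Omega/e\to\Omega'/e'$ carries each $\Pi$-class onto the $\Pi'$-class prescribed by $\varphi_0$; hence for every block $B\in\pi$ the blocks $B$ and $f_0(B)$ are $\varphi$-associated and $f_{B,f_0(B)}$ is available. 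I would then define $f\colon\Omega\to\Omega'$ blockwise by $f|_B=f_{B,f_0(B)}$.

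It remains to verify that $f$ induces $\varphi$. As $f$ maps the classes of $e$ bijectively onto those of $e'$, we get $e^f=e'$. To see $E^f=E'$, I would split $E$ into its within-block part $E\cap e=\bigsqcup_B E(X_B)$ and its between-block part $E-e$. On the first part, the defining identity $\varphi_{B,f_0(B)}(E_B)=\varphi(E)_{f_0(B)}=(E')_{f_0(B)}$ shows that $f_{B,f_0(B)}$ sends $E(X_B)$ to $E(X'_{f_0(B)})$. On the second, $f_0$ is a graph isomorphism $X_0\to X_0'$ (because $\varphi_0$ carries $E(X_0)$ to $E(X_0')$ by Theorem~\ref{theo:C}), and since $X,X'$ are composition graphs the relation $\Delta\times\Gamma$ between distinct blocks lies in the edge set precisely when the blocks are adjacent in the quotient, so $(E-e)^f=E'-e'$. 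Thus $E^f=E'$. Consequently $f$ is an isomorphism from $\wl(\{E,e\})=\wl(X)_e$ onto $\wl(\{E',e'\})=\wl(X')_{e'}=\cX'$; the algebraic isomorphism it induces agrees with $\varphi$ on the generating relations $E$ and $e$, so by the canonicity of the coherent closure it coincides with $\varphi$, proving (i).

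For part (ii), I would observe that both $\varphi_0$ and $\varphi_{\Delta,\Delta'}$ arise from $\varphi$ by operations under which the existence of an $m$-extension is inherited. Indeed, $\varphi_{\Delta,\Delta'}$ is the restriction of $\varphi$ to the $\varphi$-associated parabolic classes $\Delta,\Delta'$, while $\varphi_0$ is obtained by first passing to the quotient modulo the parabolic $e$ and then restricting to the coherent sub-closure $\wl(X_0)_\Pi\le\cX_{\Omega/e}$. Given the $m$-extension $\wh\varphi^{(m)}$, the cylinder relations $\cyl_s(i,j)$ of Lemma~\ref{lm:cyl}, together with the transformation rule $\cyl_s(i,j)^{\wh\varphi}=\cyl_{s^{\ov\varphi}}(i,j)$, let one locate the basis relations of $\wh{(\cX_\Delta)}^{(m)}$ and of $\wh{(\cX_{\Omega/e})}^{(m)}$ inside $\wh\cX^{(m)}$; restricting $\wh\varphi^{(m)}$ accordingly and using the monotonicity of the $m$-extension with respect to $\le$ should yield the required $m$-extensions of $\varphi_{\Delta,\Delta'}$ and of $\varphi_0$, hence (ii).

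I expect the principal difficulty to lie in part (ii): making precise the claim that forming the quotient modulo a parabolic and restricting to a $\varphi$-associated parabolic class commute with forming the $m$-extension, so that $\wh\varphi^{(m)}$ genuinely restricts to the $m$-extensions of $\varphi_0$ and $\varphi_{\Delta,\Delta'}$. The subtle point is that a parabolic class $\Delta$ need not be a homogeneity set, so $\Delta^m$ need not be a homogeneity set of $\wh\cX^{(m)}$; one must therefore route the restriction through the indecomposable-component machinery of \cite[Example~2.3.16]{CP} rather than through a naive restriction to $\Delta^m$. This is exactly where the cylinder calculus of Lemma~\ref{lm:cyl} and the extension theory of \cite{CP} have to be combined. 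By contrast, the gluing in part (i) is essentially bookkeeping, the only delicate step being the verification that $f_0$ pairs each block with a $\varphi$-associated one.
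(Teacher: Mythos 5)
Your part (i) is correct and builds exactly the same bijection as the paper's proof: glue the isomorphisms blockwise, $f|_B=f_{B,f_0(B)}$. The difference is in the verification. The paper checks $s^f=\varphi(s)$ directly for every basis relation $s$ of $\wl(X)_e$, splitting into the cases $s\cap e=\varnothing$ (where $s$ is a union of products $\Delta_1\times\Delta_2$ by Lemma~\ref{lm:wlY} and the direct-sum structure) and $s\subseteq e$ (where $s$ is assembled over one class of $\Pi$). You instead check only the generators $E$ and $e$ and invoke the canonicity of the coherent closure with respect to algebraic isomorphisms; since $\wl(X)_e=\wl(\{E,e\})$ and $\cX'=\wl(X')_{e'}$ by Theorem~\ref{theo:C}(i), this is legitimate and in fact somewhat tidier. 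One inaccuracy to repair: the classes of $\Pi$ are in general \emph{not} the fibers of $\wl(X_0)_\Pi$ (the fibers may be strictly finer). What you actually need, and what does hold, is that each $1_\Lambda$, $\Lambda\in\Pi$, is a relation of $\wl(X_0)_\Pi$ with $\varphi_0(1_\Lambda)=1_{\Lambda^{\varphi_0}}$; since $f_0$ induces $\varphi_0$, it maps $\Lambda$ onto $\Lambda^{\varphi_0}$, and this is what makes $B$ and $f_0(B)$ $\varphi$-associated.

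Part (ii) is where your proposal has a genuine gap. You correctly diagnose the obstacle (a class $\Delta$ of $e$ is not a homogeneity set, so one cannot naively restrict to $\Delta^m$) and name the right tools (Lemma~\ref{lm:cyl} and the associated-classes machinery of \cite[Example~2.3.16]{CP}), but the sentence ``restricting $\wh\varphi^{(m)}$ accordingly \dots should yield the required $m$-extensions'' is precisely the assertion to be proved, and no argument is supplied. The missing content, which constitutes the paper's proof, is the following. First, $\bigcap_{1\le i,j\le m}\cyl_e(i,j)$ is a relation of the $m$-extension $\wh\cX$ of $\cX=\wl(X)_e$ by Lemma~\ref{lm:cyl}(i), so its support $\bigcup_{\Delta\in\pi}\Delta^m$ is a homogeneity set of $\wh\cX$; moreover, $e^m$ restricted to this support is a partial parabolic of $\wh\cX$ whose classes are exactly the sets $\Delta^m$. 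This is what legitimizes defining the restriction $\wh\psi=\wh\varphi_{\Delta^m}\colon\wh\cX_{\Delta^m}\to\wh\cX'_{(\Delta')^m}$ to associated classes. Second, one must compute the action of $\wh\psi$ on the generators of $\wh{\cX_\Delta}^{(m)}$: it takes $\diag(\Delta^m)=\diag(\Omega^m)\cap\Delta^m$ to $\diag((\Delta')^m)$, and takes $s_{\Delta^m}$ to $(\varphi_{\Delta,\Delta'})^m(s_{\Delta^m})$ for every $s=s_1\otimes\cdots\otimes s_m\in S(\cX^m)$; only then, using $\wh\cX_{\Delta^m}\ge\wh{\cX_\Delta}^{(m)}$ with $\cX_\Delta=\wl(X_\Delta)$ (Lemma~\ref{lm:wlY}), does $\wh\psi$ restrict to the $m$-extension of $\varphi_{\Delta,\Delta'}$. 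Note also that ``monotonicity of the $m$-extension with respect to $\le$'' is not an available fact; what is used is that an algebraic isomorphism carries the coherent closure of a set of relations it maps to the coherent closure of the image set. Finally, your route for $\varphi_0$ (quotient modulo $e$, then restriction to $\wl(X_0)_\Pi$) needs an analogous, unproved compatibility of quotients with $m$-extensions; in fairness, the paper's own proof of (ii) is written out only for $\varphi_{\Delta,\Delta'}$, but in your text the claim for $\varphi_0$ remains an announcement rather than an argument.
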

\begin{proof}
Let $f_0\colon \pi\to \pi',\Delta\mapsto\Delta'$ be a bijection that 
induces $\varphi_0$. Further, let $f_{\Delta,\Delta'}\colon \Delta\to \Delta'$ 
be a bijection that induces $\varphi_{\Delta,\Delta'}$. 
Then there exists a uniquely determined bijection 
$f\colon \Omega\to\Omega'$ such that 
$f\mid_{\Delta}=f_{\Delta,\Delta'}$ for all $\Delta\in\pi$. 
We shall prove that $f$ induces $\varphi$, i.e., $s^f=\varphi(s)$ 
for all basis relations $s$ of the coherent configuration $\wl{(X)}_e$. 

Let us first assume that $s\cap e=\varnothing$. Then, by Lemma \ref{lm:wlY} 
and by the definition of a direct sum of coherent configurations,  
one can see that 
\[
s=\bigcup_{(\Delta_1,\Delta_2)\in s_{\Omega/e}} \Delta_1\times \Delta_2
\]
and then 
\[
s^f=\bigcup_{(\Delta_1',\Delta_2')\in (s_{\Omega/e})^{f_0}} \Delta_1'\times \Delta_2'.
\]
Since $f_0$ induces $\varphi_0$, it follows that $(s_{\Omega/e})^{f_0}=\varphi_0(s_{\Omega/e})=\varphi(s)_{\Omega'/\varphi(e)}=
s'_{\Omega'/e'}$, where $s'=\varphi(s)$. Thus, $s^f=\varphi(s)$.

Now let $s\subseteq e$. Denote by $e^{\circ}$ be the indecomposable component 
of $e$ that contains~$s$. Then $s=\bigcup_{\Delta\in \Lambda}s_{\Delta}$, 
where $\Lambda=\Omega/e^{\circ}$ is a class of $\Pi$. 
Define $\Lambda'=\Omega'/{\varphi(e^{\circ})}$, which is a class of $\Pi'$. 
Then:
\[
s^f = \bigcup_{\Delta^f\in \Lambda^f}\left(s_{\Delta}\right)^f = 
\bigcup_{\Delta'\in \Lambda'}\left(s_{\Delta}\right)^{f_{\Delta,\Delta'}} = 
\bigcup_{\Delta'\in \Lambda'}\varphi_{\Delta,\Delta'}\left(s_{\Delta}\right)=
\bigcup_{\Delta'\in \Lambda'}\varphi(s)_{\Delta'}=
\varphi(s).
\]

(ii) Put $\cX=\wl{(X)}_e$ and let $\cX'$ be as in Theorem \ref{theo:C}(i). 
Denote by $\wh{\cX}, \wh\cX'$ the $m$-extensions of $\cX$ and $\cX'$, 
respectively, and by $\wh{\varphi}\colon \wh\cX\to \wh\cX'$ 
the $m$-extension of $\varphi$. 
Then, for every $\Delta\in \pi$, we have $\wl(X_{\Delta})=\cX_{\Delta}$ by Lemma \ref{lm:wlY} and 
\[
\wh{\cX}_{\Delta^m}\ge \wh{\cX_{\Delta}}^{(m)}. 
\]
It follows from Lemma \ref{lm:cyl}(i) that 
$\cap_{1\leq i,j\leq m}\cyl_e(i,j)$ is a relation of $\wh\cX$. 
Therefore, the support of this relation, which equals 
$\Omega'=\cup_{\Delta\in \pi}\Delta^m$, 
is a homogeneity set of $\wh\cX$. Furthermore, since $e^m$ is a parabolic 
of $\cX^m$, it follows that $(e^m)_{\Omega'}$ is a partial parabolic of $\wh\cX$, 
whose classes are $\Delta^m$, $\Delta\in\pi$. 
Hence we can define an algebraic isomorphism $\wh{\psi}=\wh{\varphi}^{}_{\Delta^m}$ 
from $\wh\cX_{\Delta^m}$ to $\wh\cX'_{(\Delta')^m}$.
Since $\diag{(\Delta^m)}=\diag{(\Omega^m)}\cap \Delta^m$, 
one can see that $\wh{\psi}$ 
takes $\diag{(\Delta^m})$ to 
$\diag{((\Delta')^m)}$. Moreover, for a basis relation $s=s_1\otimes\cdots\otimes s_m$ of $\cX^m$, 
$$
\wh\psi(s_{\Delta^m})=\wh\varphi_{\Delta^m}((s_1)_\Delta\otimes\cdots\otimes (s_m)_\Delta)=\wh\varphi(s_1\otimes\cdots\otimes s_m)_{\Delta^m}=(\varphi_{\Delta,\Delta'})^m(s).
$$
Thus, $\wh{\psi
}$ is the $m$-extension of $\varphi_{\Delta,\Delta'}$.
\end{proof}

\section{Uniquely orientable graphs}\label{sect:uniq}

In this section, we obtain an upper bound on the separability number 
of uniquely orientable graphs. This bound follows from Theorem \ref{theo:uniq}. 
Its proof requires the lemma below, which will also be used in Section \ref{sect:main}. In what follows, let $X=(\Omega,E)$ be a graph, 
$\cX=\wl(X)$, $\wh\cX$ denote $\cX^{(2)}$, and 
$\partial=\partial_2\colon\alpha\mapsto (\alpha,\alpha)$ be 
the diagonal mapping from $\Omega$ to~$\Omega^2$. 



\begin{lemma}\label{lemma-Gammarel}
Let $X$ be an irreducible permutation graph. 
Denote by $e_1=e_1(X)$ and $e_2=e_2(X)$ partial equivalence relations 
on $\Omega^2$ such that
\[
\Omega^2/e_1=\{I(\mathbf{e})\mid \mathbf{e}\in E\}\quad\text{and}\quad 
\Omega^2/e_2=\{\Delta^2\setminus \diag{\left(\Delta^2\right)}\mid \Delta\in\Omega/{\sim}\}.
\]
Then $e_1,e_2$ are partial parabolics of $\wh{\cX}$.
\end{lemma}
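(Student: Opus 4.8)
The plan is to exhibit both $e_1$ and $e_2$ as transitive closures of suitable symmetric relations of $\wh\cX$, invoking the fact recorded in Section~\ref{ssect:parabolics} that the transitive closure of a symmetric relation of a coherent configuration is a partial parabolic. The only relations of $\wh\cX$ I need are the cylinders of Lemma~\ref{lm:cyl} together with the point-diagonal $1_{\Omega^2}$. For $e_1$, I would first show that the set of edge-points $D:=\{x\in\Omega^2:(x_1,x_2)\in E\}$ is a homogeneity set: it is the support of $\cyl_E(1,2)\cap 1_{\Omega^2}$, which is a relation of $\wh\cX$ by Lemma~\ref{lm:cyl}(i) since $E\in S(\cX)^\cup$. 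Writing the $\Gamma$-relation of $X$ as
\[
\gamma=\big(D\times D\big)\cap\Big(\big(\cyl_{1_\Omega}(1,1)\cap\cyl_{\Omega^2\setminus E}(2,2)\big)\cup\big(\cyl_{1_\Omega}(2,2)\cap\cyl_{\Omega^2\setminus E}(1,1)\big)\Big),
\]
I note that $\gamma$ is a symmetric relation of $\wh\cX$ (here $\Omega^2\setminus E=\overline E\cup 1_\Omega\in S(\cX)^\cup$ because $\WL(X)=\WL(\overline X)$, and $D\times D$ is a box over a homogeneity set). Since the implication classes of $X$ are by definition the classes of the transitive closure of the $\Gamma$-relation, that closure is exactly $e_1$, and it is a partial parabolic.

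For $e_2$ I would first observe, \emph{mutatis mutandis} (using $\overline E$ and $\Omega^2\setminus\overline E=E\cup 1_\Omega$), that the analogous relation $\overline{e_1}$ with classes $\{I_{\overline X}(\mathbf f):\mathbf f\in\overline E\}$ is also a partial parabolic of $\wh\cX$. The structural input is that each class $\Delta^2\setminus\diag(\Delta^2)$ of $e_2$ is a union of internal implication classes of $X$ and $\overline X$. Indeed, because $\Delta\in\Omega/{\sim}$ is a module, the $\Gamma$-relation of $X$ links an edge inside $\Delta$ only to edges inside $\Delta$ (an outside endpoint would be adjacent to all of $\Delta$, violating the non-adjacency clause of the $\Gamma$-relation); hence the within-$\Delta$ implication classes of $X$ are exactly those of $X_\Delta$, and likewise for $\overline X$ and $\overline{X_\Delta}$. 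As $X_\Delta$ is uniquely orientable by Lemma~\ref{lm:Colbourn}, Eq.~\eqref{eq:uniquely} applied to $X_\Delta$ yields $\Delta^2\setminus\diag(\Delta^2)=I_{X_\Delta}(\mathbf e)\cup I_{X_\Delta}(\mathbf e)^*\cup I_{\overline{X_\Delta}}(\mathbf f)\cup I_{\overline{X_\Delta}}(\mathbf f)^*$. Consequently the support of $e_2$, namely $M:=\bigcup_{\Delta}(\Delta^2\setminus\diag(\Delta^2))=\{(\alpha,\beta):\alpha\sim\beta,\ \alpha\neq\beta\}$, is precisely the union of all internal implication classes.

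Granting that $M$ is a homogeneity set of $\wh\cX$, I would finish as follows. The symmetric ``share-a-coordinate'' relation restricted to $M$,
\[
\sigma=(M\times M)\cap\big(\cyl_{1_\Omega}(1,1)\cup\cyl_{1_\Omega}(2,2)\cup\cyl_{1_\Omega}(1,2)\cup\cyl_{1_\Omega}(2,1)\big),
\]
is a relation of $\wh\cX$. Two off-diagonal pairs lying in distinct modules cannot share a coordinate value, while for each $\Delta$ with $|\Delta|\ge 2$ the set $\Delta^2\setminus\diag(\Delta^2)$ is connected under $\sigma$ (the transposition $\cyl_{1_\Omega}(1,2)\cap\cyl_{1_\Omega}(2,1)$ already links $(\alpha,\beta)$ to $(\beta,\alpha)$, and sharing a single coordinate links the rest). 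Hence the transitive closure of $\sigma$ has exactly the classes $\Delta^2\setminus\diag(\Delta^2)$ and equals $e_2$, which is therefore a partial parabolic.

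The crux — the step I grant above — is showing that $M$ is a homogeneity set, equivalently that the union of the internal implication classes is closed under $\wh\cX$. This cannot be read off from a local feature of a single class: by Lemma~\ref{lm:betweenmodules} a between-module class contains a full rectangle $\Delta\times\Delta'$, but an internal class may itself contain complete bipartite sub-blocks, so internal and between-module classes are not separated by block structure alone. The real task is to make the canonicity of the modular decomposition (Lemmas~\ref{lm:Colbourn}--\ref{lm:betweenmodules}, following Colbourn) internal to $\wh\cX$. The route I would pursue is to characterize internality of a class $I$ through the uniquely-orientable condition \eqref{eq:uniquely} for the induced subgraph $X_{\Omega(I)}$ on its incident-vertex set $\Omega(I)$ — a condition phrased entirely in terms of implication classes and their supports, hence in terms of relations of $\wh\cX$ — and then to verify that this characterization is preserved by algebraic isomorphisms, so that the internal classes form a union of fibers. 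Turning this outline into a rigorous invariance statement is the main difficulty I anticipate.
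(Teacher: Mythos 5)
Your treatment of $e_1$ is correct and is essentially the paper's own argument: the $\Gamma$-relation of $X$ is cut out by cylinders over relations of $\cX$ (the paper uses $L_1\cup L_2$ with $L_1=\cyl_{1_{\Omega}}(2,2)\cap \cyl_{\overline{E}}(1,1)\cap\cyl_{E}(1,2)\cap \cyl_{E}(2,1)$, etc.), and $e_1$ is its transitive closure, hence a partial parabolic. Your module argument (the implication classes of $X$ lying inside a class $\Delta$ of $\sim$ are exactly those of $X_\Delta$) and your conditional finish (share-a-coordinate connectivity of $\Delta^2\setminus\diag(\Delta^2)$ inside $M$) are also sound. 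But the step you ``grant'' is the entire content of the lemma for $e_2$: since the support of any relation of $\wh\cX$ is a homogeneity set, the statement ``$M$ is a homogeneity set of $\wh\cX$'' is essentially equivalent to the statement ``$e_2$ is a partial parabolic of $\wh\cX$'' that you are asked to prove. So what you have is a correct but easy reduction of the lemma to an equivalent assertion, with the assertion itself left unproved — and you say as much.

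Moreover, the route you sketch for the crux cannot be made to work as stated. ``Preserved by algebraic isomorphisms'' is not a usable sufficient criterion for homogeneity: algebraic isomorphisms act on relations, not on points, and even a canonically defined point set need not be a union of fibers. Homogeneity must be established constructively, by exhibiting $M$ (or a relation with support $M$) through the closure operations available in $\wh\cX$ — unions, intersections, dot products, supports, and quotients — starting from known relations. That is precisely the paper's missing device. The paper (a) forms the partial parabolic $e_0=e_1(X)\cup e_1(\overline X)\cup 1_{\diag(\Omega^2)}$ and a relation $s$ of $\wh\cX$ joining each point of $E\cup\overline E$ to the diagonal points over its two coordinates, so that the $s$-neighborhood of an implication class $I(\mathbf e)$ equals $\Omega(\mathbf e)^{\partial}$, i.e., encodes its vertex support; (b) passes to the quotient of $\wh\cX$ modulo $e_0$, in which every implication class becomes a single point, and invokes the fact that equality of neighborhoods in a fixed relation defines a partial parabolic of a coherent configuration (\cite[Exercise~2.7.8(1)]{CP}), pulling it back along the quotient (\cite[Theorem~3.1.11]{CP}) to get a partial parabolic $e^*$ of $\wh\cX$ grouping implication classes with equal vertex support; (c) uses Corollary~\ref{coro:Omega} and Lemma~\ref{lm:betweenmodules} to show that every class $\Delta^2\setminus\diag(\Delta^2)$ of $e_2$ is a full class of $e^*$, so that $e_2$ is a union of classes of $e^*$; and (d) eliminates the remaining classes of $e^*$ (unions of between-module implication classes of equal support) by one more relation of $\wh\cX$, the paper's $s_1$, built from cylinders and composition, whose restriction to a class of $e^*$ is non-empty exactly when that class is not a class of $e_2$, so that $e_2=e^*\setminus(e^*\cdot(s_1\cap e^*)\cdot e^*)$. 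Step (b) — the quotient-plus-neighborhood-equality mechanism — is what converts the combinatorial criterion you point to (Eq.~\eqref{eq:uniquely} on the support of a class) into an actual relation of $\wh\cX$, and it has no counterpart in your proposal; without it, your outline remains an outline.
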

\begin{proof} 
By Lemma \ref{lm:cyl}, the coherent configuration $\wh{\cX}$ has the following relations 
\begin{eqnarray*}
L_1&=&\cyl_{1_{\Omega}}(2,2)\cap \cyl_{\overline{E}}(1,1)\cap  
\cyl_{E}(1,2)\cap \cyl_{E}(2,1),\\
L_2&=&\cyl_{1_{\Omega}}(1,1)\cap \cyl_{\overline{E}}(2,2)\cap  
\cyl_{E}(1,2)\cap \cyl_{E}(2,1).
\end{eqnarray*}
Hence $\wh\cX$ has the relation $L_1\cup L_2$, which is exactly 
the $\Gamma$-relation on the edges of $X$. 
Therefore, $e_1$ coincides with the transitive closure of $L_1\cup L_2$, 
which is a partial parabolic of $\wh{\cX}$.

Let us prove that $e_2$ is a partial parabolic of $\wh\cX$. 
Define a partial parabolic $e_0=e_1(X)\cup e_1(\overline{X}){\cup 1_{\diag(\Omega^2)}}$ of $\wh\cX$. 
Observe that by Eq. \eqref{eq:uniquely} every class $\Delta^2\setminus \diag{\left(\Delta^2\right)}$ 
of $e_2$, $\Delta\in \Omega/{\sim}$, is the union of the four implication classes of $X_{\Delta}$, 
which are classes of $e_0$. In order to identify the classes of $e_2$, let us define 
an auxiliary relation
\begin{equation*}
s:=\left(\cyl_{1_{\Omega}}(1,1)\cup \cyl_{1_{\Omega}}(2,2)\right)\cap \left(\left(E\cup \overline{E}\right)\times \diag{\left(\Omega^2\right)}\right)
\end{equation*}
Observe that $s$ is a relation of $\wh{\cX}$. Indeed, 
$E$ coincides with the support of $\cyl_{1_\Omega}(1,1)\cap \cyl_{E}(1,2)\cap \cyl_{1_\Omega}(2,2)$, 
which is a relation of $\wh{\cX}$ by Lemma \ref{lm:cyl}.
Hence $E$ (and similarly $\overline{E}$) is a homogeneity set of $\wh{\cX}$, 
whence $s\in S(\cX)^\cup$. 

Note that $s$ consists of all pairs $((\alpha,\beta),(\gamma,\gamma))$ 
where $(\alpha,\beta)\in E\cup \overline{E}$ and $\gamma\in \{\alpha,\beta\}$. 
Put $N(\mathbf{e}):=\cup_{\mathbf{f}\in I(\mathbf{e})}\mathbf{f}s$ 
(here we recall that $\mathbf{f}s$ means the neighborhood of a point $\mathbf{f}$ 
in the relation $s$). Then, for every $\mathbf{e}\in E\cup \overline{E}$, 
it follows from the definition of $\Omega(\mathbf{e})$ 
(see the paragraph before Corollary \ref{coro:Omega}) that 
\begin{equation*}
N(\mathbf{e}) = \Omega(\mathbf{e})^{\partial}.   
\end{equation*}
Suppose that $\Delta\in \Omega/{\sim}$ and $\mathbf{e}\in E\cup \overline{E}$.  
If $\mathbf{e}\in \Delta\times \Delta'$ for some $\Delta'\in \Omega/{\sim}$, 
$\Delta\ne \Delta'$, then $\Delta^\partial,\Delta'^\partial\subsetneq N(\mathbf{e})$ 
by Lemma \ref{lm:betweenmodules}. On the other hand, 
if $\mathbf{e}\in \Delta^2\setminus \diag{\left(\Delta^2\right)}$, then 
$N(\mathbf{e}) = \Omega(\mathbf{e})^{\partial} = \Delta^{\partial}$ by Corollary \ref{coro:Omega};
hence, given $\mathbf{f}\in E\cup \overline{E}$, one has 
\begin{equation}\label{Ne=Nf}
    N(\mathbf{e})=N(\mathbf{f})\quad\text{if and only if}\quad \mathbf{f}\in \Delta^2\setminus \diag{\left(\Delta^2\right)}.    
\end{equation}

Now, if $\rho$ denotes the natural surjection $\rho\colon\Omega^2\to\Omega^2/e_{0}$, then:
\begin{itemize}
    \item the $\rho$-image of $\diag{\left(\Omega^2\right)}$ is naturally identified with $\diag{\left(\Omega^2\right)}$ itself;
    \item the $\rho$-image of $I(\mathbf{e})$ is a singleton for every $\mathbf{e}\in E\cup \overline{E}$;
    \item the set of all pairs $\rho(I(\mathbf{e}))$ and $\rho(I(\mathbf{f}))$ such that 
$\rho(N(\mathbf{e}))=\rho(N(\mathbf{f}))$ is an equivalence relation, say $e'$, 
on $\rho\left(E\cup \overline{E}\right)$.
\end{itemize}

Further, since $s$ is a relation of $\wh\cX$, $\rho(s)$ is a relation of 
the quotient of the coherent configuration $\wh{\cX}$ modulo $e_{0}$.
Therefore, according to \cite[Exercise~2.7.8(1)]{CP}, we see that 
the equivalence relation $e'$ is a partial parabolic of this quotient.
It follows that $e^*=\rho^{-1}(e')$ is a partial parabolic of $\wh{\cX}$ 
(see \cite[Theorem~3.1.11]{CP}). 
Clearly, every class of $e^*$ is a union of some classes of $e_{0}$. 

Every class of $e_2$ that intersects some class $\Lambda$ of $e^*$ must coincide with $\Lambda$ 
by virtue of Eq. \eqref{Ne=Nf}. To complete the proof, put $\ov e=e^*\setminus e_2$ and 
define a binary relation 
\[
s_1=\{\left((\alpha,\beta),(\alpha',\beta')\right)\in (E\cup \overline{E})^2:\ 
\left((\alpha,\alpha'),(\alpha,\beta)\right)\in \ov{e}
\},
\]
It is easily seen that $s_1=(\overline{e}\cap \cyl_{1_{\Omega}}(1,1))\cdot \cyl_{1_{\Omega}}(2,1)$ and hence $s_1$ is a relation of~$\wh{\cX}$. 
By the above, a class $\Lambda$  of $e^*$ belongs to $e_2$ if and only if 
$s_1\cap \Lambda^2=\varnothing$.
Consequently,
$e_2=e^*\setminus \left(e^*\cdot \left(s_1\cap e^*\right)\cdot e^*\right)$, 
which implies that $e_2$ is a partial parabolic of $\wh{\cX}$.
\end{proof}

\begin{theorem}\label{theo:uniq}
Let $X$ be a uniquely orientable graph. 
Then for all $\mathbf{e}\in E$ and $\mathbf{f}\in \overline{E}$ 
the coherent configuration $\wh{\cX}_{\mathbf{e},\mathbf{f}}$ is discrete.
\end{theorem}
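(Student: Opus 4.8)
The plan is to produce, inside the restriction of $\wh\cX_{\mathbf e,\mathbf f}$ to the diagonal $\diag(\Omega^2)$, a transitive tournament on $\Omega$ as a relation, and then to apply Lemma~\ref{lemma-ccturnir}. First I would record the purely combinatorial input. Since $\mathbf e\in E$ and $\mathbf f\in\overline E$, both $X$ and $\overline X$ have an edge, and as $X$ is uniquely orientable, intersecting Eq.~\eqref{eq:uniquely} with $E$ and with $\overline E$ gives $I(\mathbf e)\cup I(\mathbf e)^*=E$ and $I(\mathbf f)\cup I(\mathbf f)^*=\overline E$; by Lemma~\ref{lm:implicationclasses}(i) both unions are disjoint. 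Hence $A:=I(\mathbf e)$ is an orientation of $X$, and since $X$ admits a transitive orientation, Lemma~\ref{lm:implicationclasses}(ii) forces $A$ (or its reversal, equal to $A^*=I(\mathbf e)^*$) to be transitive; likewise $\overline A:=I(\mathbf f)$ is a transitive orientation of $\overline X$. By Lemma~\ref{lemma-tournament}, $T:=A\cup\overline A$ is a transitive tournament on $\Omega$.

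Next I would show that individualizing $\mathbf e$ turns the set $I(\mathbf e)\subseteq\Omega^2$ into a homogeneity set of $\wh\cX_{\mathbf e}$. As established at the beginning of the proof of Lemma~\ref{lemma-Gammarel} — an argument that uses only Lemma~\ref{lm:cyl} and not irreducibility — the $\Gamma$-relation of $X$ equals the relation $L_1\cup L_2$ of $\wh\cX$; being symmetric, its transitive closure $e_1$ is a partial parabolic of $\wh\cX$ whose classes are the implication classes of $X$, that is, exactly $I(\mathbf e)$ and $I(\mathbf e)^*$ in the uniquely orientable case. Once $\{\mathbf e\}$ is a fiber, the neighborhood $\mathbf e\,e_1=I(\mathbf e)$ is a homogeneity set of $\wh\cX_{\mathbf e}$, so $1_{I(\mathbf e)}\in S(\wh\cX_{\mathbf e})^\cup$; symmetrically $1_{I(\mathbf f)}\in S(\wh\cX_{\mathbf f})^\cup$.

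Now I would transfer these sets of points to relations on the diagonal. By Lemma~\ref{lm:cyl} the relation
\[
r:=\cyl_{1_\Omega}(1,1)\cdot 1_{I(\mathbf e)}\cdot\cyl_{1_\Omega}(2,2)
\]
belongs to $\wh\cX_{\mathbf e}$, and a direct check shows that its restriction $r_{\diag(\Omega^2)}$ consists precisely of the pairs $((\alpha,\alpha),(\beta,\beta))$ with $(\alpha,\beta)\in I(\mathbf e)$; under $\partial^{-1}$ this is $A$ viewed as a relation on $\Omega$. The analogous product built from $1_{I(\mathbf f)}$ yields $\overline A$. Both $A$ and $\overline A$, and hence $T=A\cup\overline A$, are relations of the coherent configuration $((\wh\cX_{\mathbf e,\mathbf f})_{\diag(\Omega^2)})^{\partial^{-1}}$ on $\Omega$, and since $T$ is a tournament its support equals $\Omega$. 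By Lemma~\ref{lemma-ccturnir} this configuration is discrete, i.e.\ every diagonal point $(\alpha,\alpha)$ is a fiber of $\wh\cX_{\mathbf e,\mathbf f}$.

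Finally I would propagate discreteness off the diagonal: for any $(\alpha,\beta)\in\Omega^2$ the singleton $\{(\alpha,\beta)\}=(\{\alpha\}\times\Omega)\cap(\Omega\times\{\beta\})$ is the intersection of the neighborhoods of the fibers $\{(\alpha,\alpha)\}$ and $\{(\beta,\beta)\}$ in the relations $\cyl_{1_\Omega}(1,1)$ and $\cyl_{1_\Omega}(2,2)$, so it is a homogeneity set and therefore a fiber; thus $\wh\cX_{\mathbf e,\mathbf f}$ is discrete. I expect the main obstacle to be the third step: carefully converting the implication-class data, which lives as a set of \emph{points} of $\Omega^2$, into a \emph{relation} on the diagonal to which the tournament criterion of Lemma~\ref{lemma-ccturnir} applies, together with confirming that individualizing one member of each pair $\{I(\mathbf e),I(\mathbf e)^*\}$ genuinely separates the two implication classes.
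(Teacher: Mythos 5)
Your proposal is correct and takes essentially the same route as the paper's own proof: individualizing $\mathbf{e}$ and $\mathbf{f}$ turns $I(\mathbf{e})$ and $I(\mathbf{f})$ into homogeneity sets via the partial parabolic $e_1$ of Lemma~\ref{lemma-Gammarel}, these are transported to relations on $\diag(\Omega^2)$ whose union is a transitive tournament by Lemma~\ref{lemma-tournament}, and Lemma~\ref{lemma-ccturnir} then gives discreteness. Your transfer relation $\cyl_{1_\Omega}(1,1)\cdot 1_{I(\mathbf{e})}\cdot \cyl_{1_\Omega}(2,2)$ is only a cosmetic variant of the paper's product $s_1\cdot s_2$, and your two extra observations (that the $e_1$ part of Lemma~\ref{lemma-Gammarel} does not use irreducibility, and the explicit propagation of discreteness from the diagonal to all of $\Omega^2$) simply make explicit details the paper leaves implicit.
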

\begin{proof}
Choose an arbitrary $\mathbf{e}\in E$ and show that 
$I(\mathbf{e})^{\partial}=\{(\alpha^{\partial},\beta^{\partial})\mid (\alpha,\beta)
\in I(\mathbf{e})\}$ is a relation of $\wh{\cX}_{\mathbf{e}}$ 
(the case $\mathbf{e}\in \overline{E}$ is similar, as the graph $\overline{X}$ 
is also uniquely orientable). 
Since $I(\mathbf{e})$ and $I(\mathbf{e})^*$ are the only transitive orientations 
of $X$, the partial parabolic $e_1$ of $\wh\cX$, defined in Lemma \ref{lemma-Gammarel}, has the set of classes $\Omega^2/e_1=\{I(\mathbf{e}),I(\mathbf{e})^*\}$.
It follows that $I(\mathbf{e})$, which is the neighborhood of $\mathbf{e}$ in $e_1$, 
is a homogeneity set of $\wh{\cX}_{\mathbf{e}}$ (see \cite[Lemma~3.3.5]{CP}). 
Since $\Delta:=\diag{(\Omega^2)}$ is a homogeneity set of $\wh{\cX}$, 
\[
s_1=(\Delta\times I(\mathbf{e}))\cap \cyl_{1_{\Omega}}(1,1)\qaq s_2=(I(\mathbf{e})\times \Delta)\cap \cyl_{1_{\Omega}}(2,2)
\]
are relations of $\wh{\cX}_{\mathbf{e}}$.
Hence $s_1\cdot s_2=\{(\alpha^{\partial},\beta^{\partial})\mid (\alpha,\beta)
\in I(\mathbf{e})\}$ is a relation of $\wh{\cX}_{\mathbf{e}}$, as required.

Let $\mathbf{e}\in E$ and $\mathbf{f}\in \overline{E}$. 
By the above, $I(\mathbf{e})^{\partial}\cup I(\mathbf{f})^{\partial}$ is 
a relation of $\wh{\cX}_{\mathbf{e},\mathbf{f}}$, as the set of relations of 
$\wh{\cX}_{\mathbf{e},\mathbf{f}}$ includes the relations of both 
$\wh{\cX}_{\mathbf{e}}$ and $\wh{\cX}_{\mathbf{f}}$.
Furthermore, by Lemma \ref{lemma-tournament}, it is a transitive tournament 
on $\Delta$. Therefore, by Lemma \ref{lemma-ccturnir}, the restriction of $\wh{\cX}_{\mathbf{e},\mathbf{f}}$ to  $\Delta$ 
is discrete, and so is $\wh{\cX}_{\mathbf{e},\mathbf{f}}$. 
The theorem is proved.
\end{proof}

\begin{prop}\label{prop:uniq}
The separability number of a uniquely orientable graph is at most $6$.
\end{prop}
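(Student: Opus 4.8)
The plan is to deduce Proposition~\ref{prop:uniq} from Theorem~\ref{theo:uniq} by unwinding the definition of the separability number $s(\cX)$ and relating it to the $2$-extension $\wh{\cX}$. Recall that $s(\cX)\le 6$ means that $\cX=\wl(X)$ is $6$-separable, i.e., every $6$-isomorphism from $\cX$ to any $\cX'$ is induced by a combinatorial isomorphism. The key link is the earlier observation in the introduction that $\dim_{\mathsf{WL}}(X)$, and hence the separability number, is controlled by the $2$-extension: Theorem~\ref{theo:uniq} tells us that individualizing two points $\mathbf{e}\in E$ and $\mathbf{f}\in \overline{E}$ of $\Omega^2$ collapses $\wh{\cX}=\wh{\cX}^{(2)}$ to the discrete configuration, and this is precisely the kind of ``local discreteness after few individualizations'' statement that bounds separability.

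\textbf{First I would} make the bookkeeping explicit. Let $\varphi\colon\cX\to\cX'$ be a $6$-isomorphism; then in particular $\varphi$ is a $2$-isomorphism, so it extends to an algebraic isomorphism $\wh\varphi=\wh\varphi^{(2)}\colon\wh{\cX}\to\wh{\cX}'$ between the $2$-extensions. The individualization of the two points $\mathbf{e},\mathbf{f}\in\Omega^2$ corresponds to individualizing, under $\wh\varphi$, their images $\wh\varphi(\mathbf{e})=:\mathbf{e}'$ and $\wh\varphi(\mathbf{f})=:\mathbf{f}'$; since $\mathbf{e}\in E$ and $\mathbf{f}\in\overline{E}$ and algebraic isomorphisms preserve the relations $E,\overline{E}$ (these are homogeneity sets / relations of $\wh{\cX}$), we get $\mathbf{e}'\in E'$ and $\mathbf{f}'\in\overline{E'}$. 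By Theorem~\ref{theo:uniq}, the configuration $\wh{\cX}_{\mathbf{e},\mathbf{f}}$ is discrete, hence separable; the restricted/extended algebraic isomorphism $\wh\varphi_{\mathbf{e},\mathbf{f}}$ between the two discrete (thus separable) configurations $\wh{\cX}_{\mathbf{e},\mathbf{f}}$ and $\wh{\cX}'_{\mathbf{e}',\mathbf{f}'}$ is therefore induced by a bijection $g\colon\Omega^2\to(\Omega')^2$.

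\textbf{The next step} is to descend this combinatorial isomorphism of $\Omega^2$-configurations back to $\Omega$. The diagonal $\diag(\Omega^2)=\Delta$ is a homogeneity set of $\wh{\cX}$ preserved by $\wh\varphi$ (this is built into the definition of an $m$-extension, condition~(1) in Section~\ref{011122a}), and $\wh{\cX}_\Delta$ pulled back along $\partial_2$ is the $2$-closure $\ov{\cX}^{(2)}$. The bijection $g$ restricts to a bijection $\Delta\to\Delta'$, which via $\partial_2^{-1}$ yields a bijection $\Omega\to\Omega'$ inducing the algebraic isomorphism $\ov\varphi=\ov\varphi^{(2)}$ between the $2$-closures. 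To convert this into a statement about $\cX$ itself rather than its $2$-closure, one must account for the gap between $\cX$ and $\ov{\cX}^{(2)}$, and this is exactly where the factor of $3$ (turning the bound $2$ on the extension into the bound $6$ on separability) enters: the standard inequality $s(\cX)\le 3\cdot s_{\mathrm{ext}}$, where $s_{\mathrm{ext}}$ is the number of individualized pairs needed to discretize $\wh{\cX}$, is the mechanism promised in the introduction (``the number $\dim_{\mathsf{WL}}(X)$ is less than or equal to the tripled separability number''). I would invoke the corresponding lemma from \cite{CP} or \cite{EP2000} that formalizes ``if individualizing $k$ points of $\Omega^m$ makes $\wh{\cX}^{(m)}$ discrete, then $\cX$ is $(m\cdot k)$-separable,'' here with $m=2$ and $k=2$ after accounting for the arc structure, yielding $s(\cX)\le 6$.

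\textbf{The main obstacle} I anticipate is the final descent step: converting ``two individualizations discretize $\wh{\cX}^{(2)}$'' into the clean numerical bound $s(\cX)\le 6$ requires carefully citing and applying the right separability-versus-extension lemma, keeping track of how individualizing a single point of $\Omega^2$ corresponds to individualizing a pair of points of $\Omega$, and why the two individualized arcs $\mathbf{e},\mathbf{f}$ together account for at most $6$ individualized points of $\Omega$. One must also verify that the choice of $\mathbf{e},\mathbf{f}$ can be made algebraic-isomorphism-invariantly (so that for \emph{every} target $\cX'$ the matching pair $\mathbf{e}',\mathbf{f}'$ is determined), but this follows since $E$ and $\overline{E}$ are nonempty relations of $\wh{\cX}$ preserved by any $2$-isomorphism, so a canonical choice on one side transfers to the other. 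Once the bookkeeping lemma is in hand, the argument is short; the substance was already done in Theorem~\ref{theo:uniq}.
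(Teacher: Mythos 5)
Your overall strategy is the same as the paper's: combine Theorem~\ref{theo:uniq} with the Evdokimov--Ponomarenko machinery relating point extensions, $m$-extensions and separability. The paper does this by citing two results: by \cite[Theorem~4.6(1)]{EP2000}, a coherent configuration that becomes discrete after individualizing $2$ points has separability number at most $2+1=3$, hence $s(\wh{\cX})\le 3$; and by \cite[Theorem~4.6(3)]{EP2000}, $s(\cX)\le m\cdot s(\wh{\cX}^{(m)})$, hence $s(\cX)\le 2\cdot 3=6$. Your attempt to reprove this inline has a genuine gap at its central step. You assert that the $2$-extension $\wh\varphi$ ``restricts/extends'' to an algebraic isomorphism $\wh\varphi_{\mathbf{e},\mathbf{f}}\colon \wh{\cX}_{\mathbf{e},\mathbf{f}}\to\wh{\cX}'_{\mathbf{e}',\mathbf{f}'}$. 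This is not automatic: an algebraic isomorphism does not map points (so ``$\wh\varphi(\mathbf{e})=\mathbf{e}'$'' is not even defined), and, more seriously, an algebraic isomorphism need not extend to an algebraic isomorphism of point extensions for \emph{any} choice of image points --- knowing only that $\mathbf{e}'$ can be chosen in $E'$ and $\mathbf{f}'$ in $\overline{E'}$ is far from sufficient. The existence of such an extension is exactly what the ``$+1$'' in \cite[Theorem~4.6(1)]{EP2000} encodes: it requires $\wh\varphi$ to be a $3$-isomorphism (two individualized points plus one), and that property must in turn be extracted from the hypothesis that $\varphi$ is a $6$-isomorphism of $\cX$ (a $2k$-isomorphism of $\cX$ yields a $k$-isomorphism of $\wh{\cX}^{(2)}$). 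Note that your argument as written uses only that $\varphi$ is a $2$-isomorphism; if it were correct as stated, it would show that every uniquely orientable graph has separability number at most $2$, which is much stronger than the proposition and is a sign that a hypothesis went unused.

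The second, related, problem is the numerical bookkeeping at the end. The lemma you invoke (``$k$ individualizations on $\Omega^m$ discretizing $\wh{\cX}^{(m)}$ implies $\cX$ is $(m\cdot k)$-separable'') would give $2\cdot 2=4$, and the discrepancy with $6$ is papered over by ``accounting for the arc structure''. The correct accounting keeps separate the two mechanisms you conflated: discreteness of $\wh{\cX}$ after $k=2$ individualizations gives $s(\wh{\cX})\le k+1=3$ (this is where the ``$+1$'' lives, and it is the step your extension-to-point-extensions claim was silently trying to absorb), while the passage from $\wh{\cX}=\wh{\cX}^{(2)}$ down to $\cX$ costs the factor $m=2$ (this is where your ``descent to the diagonal'' belongs), giving $s(\cX)\le 2\cdot(2+1)=6$.
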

\begin{proof}
Let $X$ be a uniquely orientable graph and $\cX=\wl{(X)}$. By Theorem \ref{theo:uniq}, a $2$-point extension of $\wh{\cX}$
is a discrete coherent configuration. Hence, by Theorem \cite[Theorem~4.6(1)]{EP2000}, the separability number of $\wh{\cX}$ is at most $3$. 
Finally, by Theorem \cite[Theorem~4.6(3)]{EP2000}, 
the separability number of ${\cX}$ is at most $2\cdot 3=6$. 
\end{proof}

\section{Proof of Theorem \ref{thm:perm}}\label{sect:main}

We follow the notation from Section \ref{sect:uniq}. 
Let $X$ be a permutation graph. 
By \cite[Corollary~4.6.24]{CP}, 
it suffices to verify that the separability number of the coherent configuration 
$\cX=\wl{(X)}$ is at most 6. If $X$ is uniquely orientable, then the result 
follows from Proposition \ref{prop:uniq}. 
Suppose that $X$ is not uniquely orientable. Let $\psi\colon \cX\to \cX'$ be a 6-isomorphism. 
Then $\psi$ has the $2$-extension $\wh{\psi}$. 
We will need the following lemma.

\begin{lemma}\label{lm:parabolic}
Assume that $X$ is not uniquely orientable. Then it is 
a non-trivial composition graph with respect to an equivalence relation $e$ 
such that $e^{\partial}$ is a partial parabolic of 
the coherent configuration $\wh{\cX}$.
\end{lemma}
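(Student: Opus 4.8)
The plan is to split the argument according to whether $X$ is reducible or irreducible, since these are precisely the two sources of a non-trivial modular decomposition supplied by the earlier lemmas. If $X$ is reducible, Lemma~\ref{lm:reducible} already exhibits $X$ as a non-trivial composition graph whose modules are the classes of $e$, where $e$ is the $0$-equivalence or the $1$-equivalence (whichever has a non-singleton class). If $X$ is irreducible and not uniquely orientable, Lemma~\ref{lm:interspace} exhibits $X$ as a non-trivial composition graph with respect to $e=\,\sim$. In either case the composition graph structure is handed to us, so the only thing left to prove is that $e^\partial$ is a partial parabolic of $\wh\cX$.

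For the reducible case I would argue directly. By Lemma~\ref{lm:twinparabolic} the relation $e$ is a parabolic of $\cX=\wl(X)$, hence a union of basis relations, so $\cyl_e(1,1)$ is a relation of $\wh\cX$ by Lemma~\ref{lm:cyl}(i). Since $\diag(\Omega^2)$ is a homogeneity set of $\wh\cX$ (it is the support of the distinguished relation $1_{\diag(\Omega^2)}$), restricting $\cyl_e(1,1)$ to it yields a relation of $\wh\cX$, and a direct computation identifies this restriction with $e^\partial$. As $e^\partial$ is visibly an equivalence relation on $\diag(\Omega^2)$ and a union of basis relations, it is a partial parabolic.

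The irreducible case is the substantive one, and here I would exploit Lemma~\ref{lemma-Gammarel}, which provides the partial parabolic $e_2$ of $\wh\cX$ whose classes are the off-diagonal module blocks $\Delta^2\setminus\diag(\Delta^2)$, $\Delta\in\Omega/{\sim}$. The task is to transfer this partition of off-diagonal pairs to the diagonal. To bridge the two, I would introduce the relation $u=\cyl_{1_\Omega}(1,1)\cap(\Omega^2\times\diag(\Omega^2))$, which is a relation of $\wh\cX$ (the second factor is a relation because $\diag(\Omega^2)$ is a homogeneity set) and which consists exactly of the pairs $((\alpha,\beta),(\alpha,\alpha))$; thus $u$ records the passage from a pair to the diagonal point carrying its first coordinate. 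I would then compute $u^*\cdot e_2\cdot u$ and check that it equals $\bigcup_{|\Delta|\ge 2}\Delta^\partial\times\Delta^\partial$: starting from $(\alpha,\alpha)$, the step $u^*$ produces $(\alpha,\beta)$ for arbitrary $\beta$, but composing with $e_2$ forces $\beta$ into the module $\Delta\ni\alpha$ (possible precisely because $|\Delta|\ge 2$), then $e_2$ moves freely within $\Delta^2\setminus\diag(\Delta^2)$, and the final $u$ lands on an arbitrary $(\delta,\delta)$ with $\delta\in\Delta$. Adjoining $1_{\diag(\Omega^2)}$ to absorb the singleton modules, I obtain $e^\partial={\sim}^\partial=(u^*\cdot e_2\cdot u)\cup 1_{\diag(\Omega^2)}$ as a relation of $\wh\cX$ whose classes are exactly the blocks $\Delta^\partial$, $\Delta\in\Omega/{\sim}$, so it is a partial parabolic.

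The main obstacle is this last transfer in the irreducible case. Lemma~\ref{lemma-Gammarel} only delivers a partial parabolic on the off-diagonal part of $\Omega^2$, and the delicate point is to manufacture, using only relations of $\wh\cX$, a device that recovers from each off-diagonal block $\Delta^2\setminus\diag(\Delta^2)$ exactly the diagonal block $\Delta^\partial$. The verification that $u^*\cdot e_2\cdot u$ neither under- nor over-counts---that it reaches every diagonal point of a non-singleton module yet no diagonal point outside $\Delta$, and that the resulting relation is genuinely reflexive, symmetric, and transitive on $\diag(\Omega^2)$---is exactly where the hypotheses $|\Delta|\ge 2$ and the block structure of $e_2$ are used, and it is the step worth carrying out with care.
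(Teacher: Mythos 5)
Your proposal is correct and takes essentially the same route as the paper: the reducible case is handled via Lemmas~\ref{lm:reducible} and~\ref{lm:twinparabolic} (you simply make explicit the restriction of $\cyl_e(1,1)$ to $\diag(\Omega^2)$ that the paper leaves implicit), and the irreducible case transfers the partial parabolic $e_2$ of Lemma~\ref{lemma-Gammarel} to the diagonal by composing with coordinate-matching cylinder relations. The only cosmetic difference is the choice of bridge relation: the paper uses $s=\left(\Lambda\times\diag(\Omega^2)\right)\cap\left(\cyl_{1_\Omega}(1,1)\cup\cyl_{1_\Omega}(2,2)\right)$ and the transitive closure of $s^*\cdot s$, adjoining $1_{\diag(\Omega^2)\setminus T}$, whereas you compose $u^*\cdot e_2\cdot u$ with $u$ matching first coordinates only and adjoin $1_{\diag(\Omega^2)}$ --- both computations yield the same relation $e^\partial$ of $\wh\cX$.
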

\begin{proof} 
If $X$ is reducible, the result follows from Lemmas \ref{lm:reducible} (with 
$e$ being either the $0$- or $1$-equivalence) and \ref{lm:twinparabolic}. 
Hence, in what follows, we assume that $X$ is irreducible.
Then $X$ is a non-trivial composition graph as defined in Lemma \ref{lm:interspace}, 
and let $e$ denote the equivalence relation $\sim$ (see the paragraph 
before Lemma \ref{lm:Colbourn}). 

Let $e_2$ denote the partial parabolic of $\wh{\cX}$ as defined 
in Lemma \ref{lemma-Gammarel}, and $\Lambda$ the support of $e_2$.
Denote by $s$ a relation on $\Lambda\times \diag{(\Omega^2)}$ 
such that two points are in $s$ if their first or second coordinates are equal, i.e., 
\[
s=
\left(\Lambda\times \diag{(\Omega^2)}\right)\cap 
\left(\cyl_{1_{\Omega}}(1,1)\cup \cyl_{1_{\Omega}}(2,2)\right).
\]
Then the transitive closure $t$ of $s^*\cdot s$ (which is obviously reflexive and symmetric) 
is a partial parabolic of $\wh{\cX}_{\diag{(\Omega^2)}}$. 
By the definition of $e_2$, the classes of $t$ coincide with $\Delta^{\partial}$, 
where $\Delta$ runs over the non-singleton classes of $e$. 
Thus, $e^\partial=t\cup 1_{\diag{(\Omega^2)\setminus T}}$, where 
$T$ is the support of $t$. Since $T$ is a homogeneity set, $e^\partial$ 
is a partial parabolic of $\wh\cX$. The lemma is proved.
\end{proof}

Let $e$ be the equivalence relation as defined in Lemma \ref{lm:parabolic}. 
By induction, we may assume that the coherent configuration $\wl{(X_0)}$, 
where $X_0$ is the quotient graph of $X$ modulo $e$, and 
the coherent configurations $\wl{(X_{\Delta})}$, $\Delta\in \Omega/e$, 
are all 6-separable.

Put $\wh{e}=e^{\partial}$. 
Then $\cX'$ coincides with the image of $\wh{\psi}(\cX^{\partial})$ under 
the bijection $\partial'\colon (\alpha',\alpha')\mapsto \alpha'$, $\alpha'\in\Omega'$. 
It follows that $\wh\psi$ induces an algebraic isomorphism $\varphi\colon\cX_e\to\cX'_{e'}$
where $e'=(\wh{\psi}(\wh{e}))^{\partial'}$, and such that
\begin{equation}\label{eq:f=p}
\varphi(s)=\psi(s)\mbox{~for all~} s\in S(\cX). 
\end{equation}
Therefore, $\cX$, $\cX'$ and $\varphi$ satisfy the hypothesis of Theorem \ref{theo:C}. 
Hence there exist the algebraic isomorphisms $\varphi_{\Delta,\Delta'}$ and 
$\varphi_0$ as defined 
in Theorem \ref{theo:C}(ii) and (iii). Further, by Theorem \ref{theo:iso}(ii), 
they are $6$-isomorphisms. Since $\wl{(X_0)}$ and $\wl{(X_{\Delta})}$, for all $\Delta\in \Omega/e$, are 6-separable, $\varphi_0$ and all $\varphi_{\Delta,\Delta'}$ are induced by isomorphisms. 
Thus, by Theorem \ref{theo:iso}(i), $\varphi$ is also induced by isomorphism. 
By Eq. \eqref{eq:f=p}, it follows that $\psi$ is induced by isomorphism, 
whence $\cX$ is 6-separable.

\bibliographystyle{amsplain}
\bibliography{permutations}

\end{document}